\setlist[enumerate]{label={(\arabic*)}}
\crefname{equation}{}{}
\numberwithin{equation}{section}
\newtheorem{lemma}{Lemma}[section]
\newtheorem{theorem}[lemma]{Theorem}
\newtheorem{proposition}[lemma]{Proposition}
\newtheorem{corollary}[lemma]{Corollary}
\theoremstyle{definition}
\newtheorem{remark}[lemma]{Remark}
\newtheorem{problem}[lemma]{Problem}
\newcommand\opr[1]{\operatorname{#1}}
\newcommand{\eps}{\epsilon}
\def\Q{\mathbf{Q}}
\def\C{\mathbf{C}}
\def\Z{\mathbf{Z}}
\def\cC{\mathcal{C}}
\newcommand\br[1]{{\left(#1\right)}}
\newcommand\floor[1]{\left\lfloor{#1}\right\rfloor}
\newcommand{\Zmod}[1]{\Z/{#1}\Z}
\def\GL{\opr{GL}}
\def\AGL{\opr{AGL}}
\def\sm{\smallsetminus}
\def\cube{\opr{cube}}
\begin{document}

\title[Normal covering numbers for $S_n$ and $A_n$]{Normal covering numbers for $S_n$ and $A_n$ and additive combinatorics}

\author{Sean Eberhard}
\address{Sean Eberhard, Mathematics Institute, Zeeman Building, University of Warwick, Coventry~CV4~7AL, UK}
\email{sean.eberhard@warwick.ac.uk}

\author{Connor Mellon}
\address{Connor Mellon, Mathematical Sciences Research Centre, Queen's University Belfast, Belfast BT7~1NN, UK}
\email{cmellon07@qub.ac.uk}

\thanks{SE is supported by the Royal Society. CM was supported by an MSRC summer internship.}

\begin{abstract}
    The normal covering number $\gamma(G)$ of a noncyclic group $G$ is the minimum number of proper subgroups whose conjugates cover the group. We give various estimates for $\gamma(S_n)$ and $\gamma(A_n)$ depending on the arithmetic structure of $n$.
    In particular we determine the limsups over $\gamma(S_n) / n$ and $\gamma(A_n) / n$ over the sequences of even and odd integers, as well as the liminf of $\gamma(S_n) / n$ over even integers.
    In general we explain how the values of $\gamma(S_n) / n$ and $\gamma(A_n) / n$ are related to problems in additive combinatorics.
    These results answer most of the questions posed by Bubboloni, Praeger, and Spiga as Problem 20.17 of the Kourovka Notebook.
\end{abstract}

\maketitle

\section{Introduction}

A \emph{normal covering} of a group $G$ is a family $\cC$ of proper subgroups $H < G$ whose conjugates together cover the group: $G = \bigcup_{g \in G} \bigcup_{H \in \cC} H^g$.
Clearly a group has a normal covering if and only if the group is noncyclic.
The \emph{normal covering number} $\gamma(G)$ of a noncyclic group $G$ is the minimal cardinality of a normal covering of $G$.
In this paper we only consider normal coverings and so, to be brief, we will often refer to a normal covering $\cC$ simply as a \emph{covering} or a \emph{covering family}.
Note that if $G$ if finite (or even finitely generated) there is no loss in generality in assuming $\cC$ contains only maximal subgroups. 

There are plenty of groups for which $\gamma(G) = 1$.
For example, $\GL_n(\C)$ ($n \ge 2$) is covered by the conjugates of the Borel subgroup consisting of upper triangular matrices.
However, a well-known theorem of Jordan asserts that no finite group $G$ is the union of conjugates of a proper subgroup.
Thus $\gamma(G) > 1$ for every finite (noncyclic) group $G$.
There is a wide variety of finite groups with $\gamma(G) = 2$ (see Garonzi--Lucchini~\cite{garonzi--lucchini}).
The finite simple groups with $\gamma(G) = 2$ were recently classified by Bubboloni, Spiga, and Weigel~\cite{BSW}.

Normal covering numbers arise naturally in the study of intersective polynomials. A polynomial $f \in \Z[x]$ is \emph{intersective} if it has no root in $\Q$ but it has a root modulo $m$ for every positive integer $m$.
Examples include $f(x) = (x^2 - 13)(x^2 - 17)(x^2 - 221)$ and $f(x) = (x^3 - 19) (x^2 + x + 1)$.
Let $n = \deg(f)$ and let $G$ be the Galois group of $f$, which we may consider to be a subgroup of $S_n$. Suppose $f$ factorizes as $f_1 \cdots f_m$. Let $K$ be a splitting field of $f$ and let $\theta_i \in K$ be a root of $f_i$ for each $i = 1, \dots, m$. Then it follows from the existence of the Frobenius element and the Chebotarev density theorem that the subgroups $H_i = \opr{Gal}(K / \Q(\theta_i))$ for $1 \le i \le m$ form a normal covering of $G$.
Thus $m \ge \gamma(G)$.
In particular, the theorem of Jordan quoted above implies that $f$ must be reducible.
The two examples given above have Galois groups $C_2 \times C_2$ and $S_3$, which have $\gamma(G) = 3$ and $\gamma(G) = 2$, respectively.
See \cite{berend--bilu} for more on this.

Since a generic Galois extension of $\Q$ has Galois group $S_n$ or $A_n$, there is particular reason to be interested in the values of $\gamma(S_n)$ and $\gamma(A_n)$.
For exact values for small $n$, see \cite{BPS2014}*{Table~1}.
These numbers were first investigated in earnest by Bubboloni and Praeger in \cite{BP2011}, who proved bounds of the form $a \phi(n) \le \gamma(G) \le b n$ for constants $a, b > 0$, where $\phi$ is Euler's totient function.
A linear (but weak) lower bound was subsequently established by Bubboloni, Praeger, and Spiga~\cite{BPS2013}.
An exact formula for $\gamma(S_n)$ was conjectured in \cite{BPS2014} (and \cite{kourovka}*{18.23}),
and then refuted in \cite{BPS2019}, using a construction of Mar\'oti.
The problem of determining the limsups and liminfs of $\gamma(S_n) / n$ and $\gamma(A_n) / n$ over even and odd integers, respectively ($8$ questions in all) is now listed as Problem 20.17 of the Kourovka Notebook~\cite{kourovka}.

We add a new chapter to this story by showing how normal covering numbers for $S_n$ and $A_n$ are related to some elementary puzzles in additive combinatorics. Using this translation, we obtain some new exact formulas for some special cases. For example, we have $\gamma(A_p) = \floor{(p+1)/3}$ for all sufficiently large primes $p$ not of the form $(q^d-1)/(q-1)$.
Moreover, we find $5$ of the $8$ limits in question ($3$ of which are new),
and obtain much tighter bounds on the remaining $3$ than were previously known.

\begin{theorem}
    \label{thm:limits}
    We have the following asymptotic estimates for the normal covering numbers $\gamma(S_n)$ and $\gamma(A_n)$:
    \begin{align*}
        &\limsup_{n~\textup{even}} \gamma(S_n) / n = 1/4,
        &&\limsup_{n~\textup{even}} \gamma(A_n) / n = 1/4, \\
        &\limsup_{n~\textup{odd}} \gamma(S_n) / n = 1/2,
        &&\limsup_{n~\textup{odd}} \gamma(A_n) / n = 1/3, \\
        &\liminf_{n~\textup{even}} \gamma(S_n) / n = 1/6,
        &&\liminf_{n~\textup{even}} \gamma(A_n) / n \in [1/18, 1/6], \\
        &\liminf_{n~\textup{odd}} \gamma(S_n) / n \in [1/12, 1/6],
        &&\liminf_{n~\textup{odd}} \gamma(A_n) / n \in [1/6, 4/15]. \\
    \end{align*}
\end{theorem}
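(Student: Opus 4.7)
Conjugation in $S_n$ preserves cycle type, so a normal covering by maximal subgroups is equivalent to a family $\cC$ of maximal subgroups whose union of attained cycle-type sets equals all partitions of $n$. By the O'Nan--Scott theorem, the maximal subgroups of $S_n$ other than $A_n$ split into intransitive ($S_k \times S_{n-k}$, $1 \le k < n/2$), imprimitive ($S_d \wr S_{n/d}$ for proper divisors $d$), and primitive. Primitive maximal subgroups have order at most $\exp(O(\log^2 n))$ by Praeger--Saxl and attain only polynomially many cycle types; moreover they are only needed to cover a handful of ``exceptional'' cycle types (most notably the full $n$-cycle when $n$ is prime). Thus $O(1)$ or at worst $O(\log n)$ primitive subgroups suffice, and the leading order of $\gamma(S_n)$ is controlled entirely by the intransitive and imprimitive subgroups.

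\textbf{Additive combinatorics translation.} An intransitive subgroup $S_k \times S_{n-k}$ attains $\lambda \vdash n$ iff the parts of $\lambda$ admit a sub-multiset of sum $k$. An imprimitive subgroup $S_d \wr S_{n/d}$ attains $\lambda$ iff the parts of $\lambda$ can be grouped so that the $i$-th group consists of parts divisible by some common $k_i$ and summing to $k_i d$, with the $k_i$ themselves summing to $n/d$ --- a much more flexible condition when $n$ has many divisors. Choosing a covering therefore reduces to choosing a set $K \subseteq [1, n-1]$ of intransitive cuts and a set $D$ of proper divisors of $n$, subject to an additive-combinatorial covering requirement on partitions of $n$. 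The same framework, with parity restrictions on cycle types, handles $A_n$.

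\textbf{Upper bounds via good sequences.} For each limit we exhibit a sequence of $n$ realizing it. The odd-$n$ limsups are achieved on primes: for $\gamma(S_p)$, take $K = \{1, \ldots, (p-1)/2\}$ and one copy of $\AGL_1(p)$ to catch the $p$-cycle, giving $\tfrac{p-1}{2} + 1$ subgroups; for $\gamma(A_p)$, parity restrictions force partitions to have an odd number of parts, so three-part partitions replace two-part ones as the critical obstruction, yielding the value $1/3$. For the even-$n$ limsup $1/4$ we take $n \equiv 2 \pmod 4$ with a restricted prime factorization and combine a small $K$ with imprimitive subgroups handling the ``even-part'' partitions. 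For the liminfs we take highly composite $n$, so that the many imprimitive $S_d \wr S_{n/d}$ carry most of the covering load, allowing a substantially smaller $K$. In each case the construction is explicit and one verifies by case analysis that every partition of $n$ is covered.

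\textbf{Lower bounds and the main obstacle.} For limsup lower bounds the two-part partitions $(k, n-k)$, $1 \le k < n/2$, each force $k$ or $n-k$ into $K$, since the imprimitive subgroups containing such $\lambda$ require $\gcd(k, n-k) > 1$ --- a constraint never met when $n$ is prime --- giving $|K| \gtrsim n/2$ there. The principal difficulty is the \emph{universal} lower bound required for the liminfs: for \emph{every} $n$ of the stated parity we must show that no pair $(K, D)$ with $|K| + |D| < cn$ covers all partitions. This we establish by producing, for each candidate $(K, D)$ of small size, an explicit witness partition --- typically a two- or three-part partition whose parts avoid $K$ and are incompatible with all block structures in $D$ --- and controlling the imprimitive covering condition uniformly in $n$. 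The three limits left as intervals correspond to exactly those additive-combinatorial covering questions that we cannot yet pin down tightly; closing these gaps is the essential obstacle to a complete answer.
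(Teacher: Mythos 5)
The broad strategy here---reduce to covering partitions, split maximal subgroups by O'Nan--Scott, note that primitive subgroups are asymptotically negligible, and phrase the remaining problem in additive-combinatorial terms---matches the paper's setup. But several ideas that make the paper's proof actually go through are missing or mischaracterized, and as written the proposal has real gaps.

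First, you never isolate the crucial observation that the hard-to-cover partitions are exactly those with $3$ or $4$ parts (giving the pairing $(S_n, n \text{ even})$ with $(A_n, n \text{ odd})$, and $(S_n, n \text{ odd})$ with $(A_n, n \text{ even})$ via $\mathrm{sgn}(\pi) = (-1)^{n-k}$). Without this you have no explanation for why the constants $1/4$, $1/3$, $1/6$, $1/12$, $1/18$ appear, and no mechanism for a lower bound. The paper reduces the $3$-part case to bounding the size of an approximately \emph{coprime-sum-free} subset of $\Zmod{n}$ and the $4$-part case to bounding an approximately \emph{coprime-cube-free} subset of $\{1,\dots,n-1\}$. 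It is these two additive-combinatorial statements (Propositions~\ref{prop:coprime-sum-free-2/3}, \ref{prop:coprime-cube-free}, and \ref{prop:coprime-cube-free-2}) together with Cauchy--Davenport and the Green--Ruzsa extension that supply the lower bounds. Your lower-bound sketch instead gestures at ``producing, for each candidate $(K, D)$ of small size, an explicit witness partition,'' which is not an argument: for composite $n$ there are exponentially many possible sets $K$, and one needs a density statement, not an ad hoc witness construction.

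Second, for the primitive subgroups you claim ``$O(1)$ or at worst $O(\log n)$ primitive subgroups suffice'' and that they are only needed for ``a handful of exceptional cycle types.'' In the paper's lower-bound direction what is actually needed (and proved in \Cref{lem:primitive}) is a classification of which $3$- and $4$-part cycle types can sit inside a primitive group at all; without that list you cannot control the error term when applying the sum-free/cube-free bounds. ``They are negligible'' must be a theorem, not a heuristic.

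Third, the upper-bound sketch for the liminfs is misleading. You suggest taking ``highly composite $n$'' and letting ``the many imprimitive $S_d \wr S_{n/d}$ carry most of the covering load.'' In fact the paper uses only $\omega(n)$-many wreath products and the covering is still dominated by a carefully chosen family of intransitive $S_k \times S_{n-k}$ (e.g.\ $k$ coprime to $p_1 p_2$ for the $n/6$ bound, or $3 \mid k$ plus $\gcd(k,n)=1$ for the $n/6 + \phi(n)/2$ bound); to reach $1/6$ it suffices that $6 \mid n$, and to reach $\phi(n)/n \to 0$ along the subsequence for the $4$-cycle case. The constants do not come from packing in many wreath products. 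Finally, the upper bound achieving $\limsup_{n \text{ odd}} \gamma(A_n)/n = 1/3$ is not just ``parity forces three parts''; matching the upper bound $\floor{n/3}+1$ with a lower bound of the same order requires the Cauchy--Davenport argument in \Cref{prop:A_p-lower} and the new covering family of \Cref{prop:new-upper-1}, neither of which appears in your sketch.

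In short: the framing is right, but the proposal would not produce the stated constants because it lacks (i) the reduction to coprime-sum-free / coprime-cube-free sets, (ii) the classification of primitive possibilities for $3$- and $4$-cycle types, and (iii) the Cauchy--Davenport/Green--Ruzsa machinery for the density lower bounds.
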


Moreover, the three outstanding questions are essentially reduced to two questions in additive combinatorics related to sum-free and cube-free sets with coprimality conditions.
Let us now describe these problems. First, call a subset $X \subset \Zmod n$ \emph{coprime-sum-free} if there does not exist $x, y, x+y \in X$ with $\gcd(x, y, n) = 1$.

\begin{problem}
    \label{problem-1}
    Determine the size of the largest symmetric coprime-sum-free subset of $\Zmod n$ for all $n \ge 1$.
\end{problem}

Absent the coprimality condition, the solution to this problem is a well-known consequence of Kneser's generalization of the Cauchy--Davenport Theorem: see for example Diananda--Yap~\cite{DY}. The answer is as follows. If $n$ has a prime factor $p \equiv 2 \pmod 3$ then the answer is exactly $(1+1/p)n/3$, where $p$ is the smallest such factor.
Otherwise, if $n$ is divisible by $3$ then the answer is $n/3$.
Finally, if every prime factor of $n$ is congruent to $1 \pmod 3$ then the answer is $(1-1/n)/3$.

The coprimality condition makes the problem more complicated.
If $n$ is divisible by $6$, the example $X = \{x \in \Zmod n : 2 \mid x ~\text{or} ~ 3 \mid x\}$ shows that $X$ can have size $(2/3)n$, and moreover we can show (see \Cref{prop:coprime-sum-free-2/3}) that no coprime-sum-free set can have size greater than $(2/3)n$. This fact turns out to give the value $\liminf_{n~\textup{even}} \gamma(S_n) / n = 1/6$.
But what if $n$ is odd? Then the constant ought to drop to $7/15$, and even smaller if $n$ is not divisible by $3$, etc.
A complete answer to \Cref{problem-1} would clarify how $\gamma(S_n) / n$ ($n$~even) and $\gamma(A_n) / n$ ($n$~odd) depend on the presence of small factors.

Next, for integers $x, y, z$ define
\[
    \cube(x, y, z) = \{x, y, z, x+y, x+z, y+z, x+y+z\}.
\]
Call a subset $X \subset \{1, \dots, n-1\}$ \emph{coprime-cube-free} if there does not exist $x, y, z$ with $\gcd(x, y, z, n) = 1$ and $\cube(x, y, z) \subset X$.

\begin{problem}
    \label{problem-2}
    Determine the size of the largest symmetric coprime-cube-free subset of $\{1, \dots, n-1\}$ for all $n \ge 1$.
\end{problem}

Even without the coprimality condition, avoidance problems for higher-order configurations like cubes can be difficult. For example, recently Meng~\cite{meng} considered a version of the above problem without the coprimality condition.

If $n$ is divisible by $3$, the example $X = \{x : 3 \nmid x\}$ shows that $X$ can have size $(2/3)n$.
We show (see \Cref{prop:coprime-cube-free}) that $|X| \le (5/6) n + O(1)$.
A solution to this problem should lead to the determination of the limits $\liminf_{n~\textup{even}} \gamma(A_n) / n$ and $\liminf_{n~\textup{odd}} \gamma(S_n) / n$.
Again, it is reasonable to expect that the solution should be sensitive to the presence of small prime factors, for example the prime factor $3$.
Consequently it is reasonable to expect for example that
\[
    \liminf_{\gcd(n, 6) = 1} \gamma(S_n) / n > \liminf_{n~\text{odd}} \gamma(S_n) / n.
\]

Throughout the paper, the cases ($S_n$, $n$ even) and ($A_n$, $n$ odd) will often be considered together, as will the complementary cases ($S_n$, $n$ odd) and ($A_n$, $n$ even).
The reason for this is easy to explain.
First, since we may use $A_n$ in a covering family of $S_n$, covering $S_n$ is almost the same as covering all odd permutations, while obviously covering $A_n$ is the same as covering all even permutations.
It turns out that the most difficult permutations to cover are the ones with only $3$ or $4$ cycles.
Now recall that if $\pi \in S_n$ has $k$ cycles then the sign of $\pi$ is $(-1)^{n-k}$.
Therefore $k = 3$ corresponds to the cases ($S_n$, $n$ even) and ($A_n$, $n$ odd)
while $k = 4$ corresponds to the cases ($S_n$, $n$ odd) and ($A_n$, $n$ even).
The case $k = 4$ is more difficult.

\section{Upper bounds}

In this section we summarize the known upper bounds and add two new observations (\Cref{prop:new-upper-1,prop:new-upper-2}).
Let us emphasize that the results with citations are not new, nor are the proofs, but it is valuable to recall them here to give context to the arguments for the lower bounds that we will give in the next sections.

\begin{proposition}[\cite{BP2011}*{Theorem~A}]
    \label{prop:basic-upper-bounds}
    For $G = S_n$ or $A_n$ with $n \ge 4$,
    \[
        \gamma(G) \le \begin{cases}
            \floor{n/2} &\text{if}~ G = S_n,~n~\text{odd},\\
            \floor{n/3} + 1 &\text{if}~ G = A_n,~n~\text{odd},\\
            \floor{n/4} + 1 &\text{if $n$ is even}.
        \end{cases}
    \]
\end{proposition}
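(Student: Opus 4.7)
The proof proceeds by exhibiting, in each case, an explicit covering family of maximal subgroups. The relevant maximal subgroup classes are the intransitive subgroups $M_k = S_k \times S_{n-k}$ for $1 \le k \le \floor{n/2}$, the imprimitive subgroups $S_{n/d} \wr S_d$ for nontrivial divisors $d \mid n$, and (when $n = p$ is prime) the affine group $\AGL(1, p)$, together with the intersections of all of these with $A_n$. The fundamental dictionary is: a permutation $\pi \in S_n$ lies in some conjugate of $S_k \times S_{n-k}$ iff some subcollection of its cycles sums to $k$, and $\pi$ lies in some conjugate of $S_m \wr S_d$ iff its cycle structure decomposes along a block system of $d$ blocks of size $m$. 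In particular $n$-cycles belong to $\AGL(1, p)$ (for $n = p$ prime) and to every nontrivial imprimitive $S_{n/d} \wr S_d$ (for $n$ composite).

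For $A_n$ with $n$ odd, a nontrivial non-$n$-cycle even permutation has at least three cycles (since the number of cycles is congruent to $n \bmod 2$), so it has a cycle of length at most $\floor{n/3}$. Thus the family $\{(S_k \times S_{n-k}) \cap A_n : 1 \le k \le \floor{n/3}\}$ covers every element of $A_n$ except the $n$-cycles, and a single further subgroup (either $\AGL(1, n) \cap A_n$ when $n$ is prime, or an appropriate imprimitive intersection otherwise) catches those, yielding the bound $\floor{n/3} + 1$. For $n$ even (both $S_n$ and $A_n$), the same parity constraint on the number of cycles forces either a short list of exceptional cycle types such as $(n)$, $(a, n-a)$, or $(n/2, n/2)$, or at least three cycles with shortest of length at most $\floor{n/4}$; the intransitive subgroups for $k \le \floor{n/4}$ together with one further subgroup handling the exceptions gives the bound $\floor{n/4} + 1$.

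The tightest case is $(S_n, n \text{ odd})$. Every non-$n$-cycle already has a cycle of length at most $(n-1)/2$, so the naive intransitive family $M_1, \ldots, M_{(n-1)/2}$ uses all $\floor{n/2}$ slots and leaves no room for covering $n$-cycles. The trick will be to discard one intransitive subgroup and replace it by a single non-intransitive subgroup that covers both the $n$-cycles and the cycle types lost by the discarding. For $n = p$ prime, dropping $M_1 = S_{p-1}$ in favour of $\AGL(1, p)$ works, because a quick combinatorial check shows the cycle types with all subset sums in $\{0, 1, p-1, p\}$ are exactly $(p)$ and $(p-1, 1)$, both of which lie in $\AGL(1, p)$ (the latter as an element $x \mapsto ax$ of $F_p^*$ of order $p - 1$). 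For composite $n$, the intransitive to discard depends on the divisor structure, and the replacement is an imprimitive $S_{n/d} \wr S_d$ chosen so that the cycle types rendered uncovered by the discard are exactly those ``aligned'' with the block decomposition (for instance, for $n = 9$, dropping $M_3$ and adding $S_3 \wr S_3$ catches the lost types $(9), (6, 3), (3^3)$).

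\textbf{Main obstacle.} The combinatorial skeleton of each argument is routine, but the main work is the enumeration of borderline cycle types — those whose subset sums just miss the chosen range of $k$'s — and the verification that the single replacement subgroup really does catch all of them. The composite case of $(S_n, n \text{ odd})$ is the most fiddly, because the choice of which $M_k$ to drop and which imprimitive $S_{n/d} \wr S_d$ to insert must be made to match the prime factorization of $n$, and so the analysis bifurcates into prime and composite subcases.
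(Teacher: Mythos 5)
Your general approach matches the paper's: exhibit explicit covering families built from intransitive subgroups $S_k \times S_{n-k}$, imprimitive subgroups $S_p \wr S_{n/p}$, and $\AGL_1(p)$, and count them. Your treatment of $A_n$ with $n$ odd is essentially the paper's, and your prime subcase of $S_n$ with $n$ odd (drop $M_1$, add $\AGL_1(p)$, check the residual types are exactly $(p)$ and $(p-1,1)$) is also correct. For composite odd $n$ you leave the details unfilled but the intended idea (drop one intransitive subgroup indexed by a small prime factor $p$, add $S_p \wr S_{n/p}$) does work; the paper is a little slicker, dropping \emph{all} multiples of $p$ at once, but both give a family of size at most $\floor{n/2}$.

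The even case, however, has a genuine gap. You propose the family $\{S_k \times S_{n-k} : k \le \floor{n/4}\}$ together with one further subgroup. This does not cover. Take any odd $a$ with $\gcd(a, n) = 1$ and $n/4 < a < n/2$; then an element of type $(a, n-a)$ has no subset of cycles summing to anything in $\{1, \dots, \floor{n/4}\}$, so it is in no $S_k \times S_{n-k}$ from your family, and since $\gcd(a, n-a) = 1$ it is not in any imprimitive subgroup either (it is also generically not in any primitive subgroup). Already for $n = 20$ the type $(7, 13)$ is missed. There are roughly $n/8$ such $a$, and no single additional subgroup can catch them all, so the size count collapses. Your supporting heuristic ``at least three cycles with shortest of length at most $\floor{n/4}$'' is also not true (three cycles only guarantee shortest length $\le n/3$, e.g.\ type $(n/3, n/3, n/3)$), and for $G = S_n$ with $n$ even there is no parity constraint on the number of cycles in the first place.

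The fix is the paper's neat observation: instead of taking the \emph{smallest} $\floor{n/4}$ values of $k$, take the \emph{odd} values $k < n/2$ — there are exactly $\floor{n/4}$ of these as well. Every permutation with an odd cycle (and not of type $(n/2, n/2)$) has a $\pi$-invariant set of odd size less than $n/2$, hence is covered; every permutation with no odd cycle, together with type $(n/2, n/2)$, lies in a conjugate of $S_2 \wr S_{n/2}$. That yields $\floor{n/4} + 1$ subgroups and actually covers.
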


For the sake of comparison with our lower bound arguments, we indicate the covering families and sketch the proof.

\begin{proof}[Sketch]
    \emph{Case 1: $G = S_n$ with $n$ odd.}
    Let $p$ be the smallest prime factor of $n$. If $p < n$ then
    \[
        \{S_k \times S_{n-k} : 1 \le k \le (n-1) / 2, ~ p \nmid k\} \cup \{S_p \wr S_{n/p}\}
    \]
    is a covering family of cardinality at most $(n-1)/2$. If $p = n$ then consider instead
    \[
        \{S_k \times S_{n-k}: 1 < k \le (n-1)/2\} \cup \{ \AGL_1(p) \}.
    \]
    The subgroup $\AGL_1(p)$ covers the cycle types $(n)$ and $(n-1, 1)$, and the subgroups $S_k \times S_{n-k}$ with $1 < k \le (n-1) / 2$ cover all others.

    \emph{Case 2: $G = A_n$ with $n$ odd.}
    Since $n$ is odd, every element of $A_n$ has an odd number of cycles, and is therefore either an $n$-cycle or has a cycle of length at most $n/3$. A covering family is therefore given by
    \[
        \{(S_k \times S_{n-k}) \cap A_n : k \le n/3\} \cup \{ M \cap A_n \},
    \]
    where $M$ is any maximal subgroup of $S_n$ other than $A_n$ containing an $n$-cycle.
    Indeed, $|M : M \cap A_n| = 2$, so $M \cap A_n$ is normalized by at least one odd permutation and therefore $M \cap A_n$ intersects both $A_n$-conjugacy classes of $n$-cycles.
    If $n$ is composite one may take $M = S_p \wr S_{n/p}$ where $p$ is the smallest prime factor of $n$, and if $n$ is prime one may take $M = \AGL_1(p)$.

    \emph{Case 3: $G = S_n$ or $A_n$ with $n$ even.}
    Every permutation without odd cycles is contained in a conjugate of $S_2 \wr S_{n/2}$, and so are the permutations of type $(n/2, n/2)$, and every other permutation is contained in $S_k \times S_{n-k}$ for some odd $k < n/2$. Therefore
    \[
        \{(S_k \times S_{n-k}) \cap G : 1 \le k < n/2, ~k~\text{odd}\} \cup \{ (S_2 \wr S_{n/2}) \cap G\}
    \]
    is a covering family of size $\floor{n/4} + 1$.
\end{proof}
\begin{remark}
    \label{remark:known-exact-things}
    Equality holds above for $S_p$ ($p$ prime), $A_{2^k}$ ($k \ne 3$), and $A_{2p}$ ($p$ prime):
    see \cite{BP2011}*{Propositions 7.1, 7.5(c), and 7.6(c)}.
    We will show later that equality also holds for $S_{2^k}$ and $S_{2p}$ ($p$ prime) and for $A_p$ for almost all primes $p \equiv 2 \pmod 3$.
\end{remark}

The following bound improves the bound in \Cref{prop:basic-upper-bounds} by $1$ in one special case.
The covering family may seem mysterious for now.
Its origin will be clarified later when the connection to additive combinatorics is fleshed out.

\begin{proposition}
    \label{prop:new-upper-1}
    Suppose $p \ge 5$ is prime and $p \equiv 1 \pmod 3$. Then
    \[
        \gamma(A_p) \le (p-1) / 3.
    \]
\end{proposition}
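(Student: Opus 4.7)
The plan is to construct an explicit normal covering family of $A_p$ of size $(p-1)/3$.  Writing $p = 6m + 1$ (valid since odd primes $\equiv 1 \pmod 3$ are $\equiv 1 \pmod 6$), I would take
\[
    \mathcal{F} = \{(S_j \times S_{p-j}) \cap A_p : j \in J\} \cup \{\AGL_1(p) \cap A_p\},
\]
where $J = \{j \in \Z : 2 \le j \le 3m-1, ~ j \not\equiv 1 \pmod 3\}$ has $|J| = 2m - 1$ (a direct count), so $|\mathcal{F}| = 2m = (p-1)/3$.

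To verify $\mathcal{F}$ is a normal covering, I would use that every element of $A_p$ has an odd number of cycles (since $p$ is odd).  The $p$-cycles are covered by the translations in $\AGL_1(p) \cap A_p$.  The key observation for 3-cycle elements is that $\AGL_1(p)$ contains an element of order $(p-1)/2 = 3m$ with cycle structure $(3m, 3m, 1)$, which lies in $A_p$ since its sign is $(-1)^{p-3} = +1$; thus $\AGL_1(p) \cap A_p$ covers the cycle type $(1, 3m, 3m)$.

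The central lemma asserts that the only integer triple $(a, b, c)$ with $a + b + c = p$, all components $\ge 1$, and $\{a, b, c\} \cap (J \cup (p-J)) = \emptyset$ is $(1, 3m, 3m)$.  To prove it, the excluded set $\{1, \dots, p-1\} \setminus (J \cup (p-J))$ consists of $\{1, 4, 7, \ldots, 3m - 2, 3m + 1\}$ (all $\equiv 1 \pmod 3$) together with $\{3m, 3m + 3, \ldots, 6m\}$ (all $\equiv 0 \pmod 3$), and crucially contains no element $\equiv 2 \pmod 3$.  Since $a + b + c = p \equiv 1 \pmod 3$, exactly one of $a, b, c$ is $\equiv 1 \pmod 3$ and two are $\equiv 0 \pmod 3$; the two $\equiv 0$ values are each $\ge 3m$, summing to at least $6m$, which forces the remaining value to equal $1$ and the other two to both equal $3m$.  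This reduces the 3-cycle case to the single exceptional type already covered by $\AGL_1(p) \cap A_p$.

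For cycle types with $s \ge 5$ cycles $(c_1, \ldots, c_s)$, the subset sums of $\{c_1, \ldots, c_s\}$ supply many candidate values of $j$ (if a subset sums to $j$, the type lies in $S_j \times S_{p-j}$), and I would show by a residue analysis that some such sum is in $J$.  If some $c_i \equiv 2 \pmod 3$, the singleton works directly; if two cycle lengths are $\equiv 1 \pmod 3$, their pairwise sum is $\equiv 2$; the delicate case is exactly one $c_i \equiv 1 \pmod 3$ and at least four cycle lengths $\equiv 0 \pmod 3$ (each necessarily $\ge 3$) summing to $p - c_i < 12m$, where a pigeonhole argument (four parts each $\ge 3m$ would already exceed $12m$) forces some such cycle length to be $\le 3m - 3$, giving a singleton subset sum in $J$.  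The main obstacle is this final 5+-cycle subcase: the mod-3 residue structure is tight and a careful pigeonhole count is needed to locate a subset sum in the correct range.  The 3-cycle lemma, though short, is the central new insight that enables the improvement by one beyond the basic bound $\floor{p/3} + 1$.
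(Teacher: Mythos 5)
Your construction is correct, and in fact it produces the \emph{identical} covering family as the paper's, just described in a more transparent way. Writing $p = 3m+1$, the paper takes $K = \{k : (m+1)k \bmod p \in \{1,\dots,m-1\}\}$, and one can check that $K \cup (p-K)$ coincides with your $J \cup (p-J)$: both are the set of $j \in \{2,\dots,p-2\}$ with $j \equiv 2 \pmod 3$, or $j \equiv 0 \pmod 3$ and $j < p/2$, or $j \equiv 1 \pmod 3$ and $j > p/2$. Your explicit mod-$3$ description and residue-based proof of the central $3$-cycle lemma thus demystify the paper's construction (which the authors themselves admit ``may seem mysterious''); the paper instead applies the change of variables $x \mapsto (m+1)x$ to map the excluded set onto the short interval $I = \{m,\dots,2m+1\}$ and then analyses solutions of $y_1+y_2+y_3 \equiv 0$ with $y_i \in I$. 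The one genuine divergence is the $\ell \ge 5$ case: the paper avoids your residue casework entirely by grouping the $\ell$ cycle lengths into three blocks each of total size $> 1$ and reusing the $\ell=3$ analysis on the block sums, whereupon the conclusion $(x,y,z) = (1,(p-1)/2,(p-1)/2)$ contradicts $x,y,z>1$. Your pigeonhole argument also works; the only small imprecision is that a singleton $c_i$ or pairwise sum $c_i + c_j$ can exceed $p/2$, so you should say the relevant subset sum lies in $J \cup (p-J)$ rather than in $J$ (of course $S_j \times S_{p-j}$ is the same subgroup either way).
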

\begin{proof}
    Write $p = 3m+1$, so $m = (p-1)/3$.
    Let $K$ be the set of integers $k \in \{1, \dots, p-1\}$ such that $(m+1)k \in \{1, \dots, m-1\} \pmod p$.
    Since $p$ is prime, $|K| = m-1$.
    Let $I$ be the middle interval $\{m, \dots, 2m+1\}$ mod $p$ and note
    \[
        \Zmod p = \{0\} \sqcup (m+1)K \sqcup I \sqcup -(m+1)K.
    \]
    We claim that
    \[
        \cC = \{(S_k \times S_{p-k}) \cap A_p : k \in K\} \cup \{\AGL_1(p) \cap A_p\}
    \]
    is a covering family of cardinality $|\cC| = |K| + 1 = m = (p-1)/3$.
    Let $\pi \in A_p$ and suppose $\pi$ has cycle type $(x_1, \dots, x_\ell)$. Note that $\ell$ is odd.

    \emph{Case $\ell = 1$:} If $\ell = 1$ then $\pi$ is a $p$-cycle, so $\pi$ is covered by $\AGL_1(p) \cap A_p$.

    \emph{Case $\ell = 3$:} If $x_i \in K$ or $p - x_i \in K$ for some $i$ then $\pi$ is covered by $S_{x_i} \times S_{p-x_i}$, so assume $x_1, x_2, x_3 \notin K \cup (p-K)$. For $y_i = (m+1)x_i \bmod p$ for $i = 1,2,3$. Then $y_1, y_2, y_3 \in I$ and $y_1 + y_2 + y_3 = (m+1)p = 0$. This implies that $y_1, y_2, y_3 \in \{\pm m, \pm(m+1)\}$.
    Noting that $(m+1) (p-1)/2 = 3(m+1)m/2 \equiv m \pmod p$, it follows that $x_1, x_2, x_3 \in \{\pm 1, \pm (p-1)/2\} \pmod p$,
    so $\pi$ must have type $(1, (p-1)/2, (p-1)/2)$.
    Thus $\pi$ is covered by $\AGL_1(p) \cap A_p$.

    \emph{Case $\ell \ge 5$:} We are obviously done if $x_i = 1$ for all $i$, so assume without loss of generality that $x_1 > 1$. Let $x = x_1$, $y = x_2 + x_3$, $z = x_4 + \cdots + x_\ell$. Arguing as in the previous case, we are done unless $x, y, z \notin K \cup (p-K)$, and if $x, y, z \in K \cup (p-K)$ then since $x + y + z = p$ it follows that $(x, y, z)$ is $(1, (p-1)/2, (p-1)/2)$ in some order, but this is impossible because $x, y, z > 1$.
\end{proof}

If $n$ has small prime factors then the values of $\gamma(S_n)$ and $\gamma(A_n)$ are much smaller than the upper bound in \Cref{prop:basic-upper-bounds} indicated above.

\begin{proposition}[\cite{BPS2014}*{Propostion~3.1}]
    \label{prop:BPS-upper-bound}
    Assume $G = S_n$ and $n$ is even or $G = A_n$ and $n$ is odd.
    Suppose $n$ is not a prime power and let $p_1 < p_2$ be the smallest prime factors of $n$.
    Then
    \[
        \gamma(G) \le \frac{n}{2} \br{1 - \frac1{p_1}} \br{1 - \frac1{p_2}} + 2.
    \]
\end{proposition}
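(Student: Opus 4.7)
The shape of the bound $\tfrac{n}{2}(1 - 1/p_1)(1 - 1/p_2) + 2$ is strongly suggestive: the $+2$ accounts for two wreath-product subgroups and the factor $(1 - 1/p_1)(1 - 1/p_2)$ is the density of integers coprime to $p_1 p_2$. My plan is therefore to take
\[
\cC = \{(S_k \times S_{n-k}) \cap G : k \in K\} \cup \{(S_{p_1} \wr S_{n/p_1}) \cap G,~ (S_{p_2} \wr S_{n/p_2}) \cap G\},
\]
where $K = \{k : 1 \le k \le \floor{n/2},~ \gcd(k, p_1 p_2) = 1\}$. Since $p_1 p_2 \mid n$, a short inclusion--exclusion (in which the floor corrections cancel exactly) gives $|K| = (n/2)(1 - 1/p_1)(1 - 1/p_2)$ on the nose, so $|\cC| \le |K| + 2$ matches the bound.

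To check $\cC$ is a covering, take $\pi \in G$ with cycle type $(x_1, \dots, x_\ell)$. If every $x_i$ is divisible by some common $p_j$, then $\pi$ lies in a conjugate of $S_{p_j} \wr S_{n/p_j}$ (realize each cycle of length $p_j y_i$ as a length-$y_i$ block-orbit lifted by a single $p_j$-cycle within the blocks). Otherwise, writing $r = (x_i \bmod p_1) \in \F_{p_1}^\ell$ and $s = (x_i \bmod p_2) \in \F_{p_2}^\ell$, both $r$ and $s$ are nonzero, while $\sum_i r_i = 0 = \sum_i s_i$ because $p_1, p_2 \mid n$. I would then reduce to the combinatorial lemma: there exists $v \in \{0, 1\}^\ell \setminus \{0, \mathbf{1}\}$ with $v \cdot r \ne 0$ in $\F_{p_1}$ and $v \cdot s \ne 0$ in $\F_{p_2}$. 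Such a $v$ supplies a proper non-empty cycle sub-sum coprime to $p_1 p_2$; by the symmetry $k \leftrightarrow n - k$ (which preserves coprimality with $p_1 p_2$ because $p_1 p_2 \mid n$), either this sub-sum or its complement lies in $K$, placing $\pi$ in a conjugate of $S_k \times S_{n-k}$.

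The main obstacle is the lemma, which I would establish by a double-counting argument on the Boolean cube. Set $A = \{v \in \{0,1\}^\ell : v \cdot r = 0\}$ and define $B$ analogously. Since $r \ne 0$, picking $i$ with $r_i \ne 0$ and pairing $v$ with $v \oplus e_i$ shows $|A| \le 2^{\ell - 1}$, and similarly $|B| \le 2^{\ell - 1}$. But $0$ and $\mathbf{1}$ both lie in $A \cap B$ (the latter using $\sum_i r_i = 0 = \sum_i s_i$), so $|A \cap B| \ge 2$ and hence $|A \cup B| \le 2^\ell - 2$. Therefore $\{0,1\}^\ell \setminus (A \cup B)$ is nonempty and automatically avoids $\{0, \mathbf{1}\}$. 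The remaining bookkeeping---the inclusion--exclusion count and the verification that each listed subgroup is a proper subgroup of $G$---is routine.
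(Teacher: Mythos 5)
Your covering family is identical to the paper's, and your count $|K| = \tfrac{n}{2}(1-1/p_1)(1-1/p_2)$ is correct (one can also see it by the involution $k \mapsto n-k$ on $\{1,\dots,n-1\}$, noting $\gcd(n/2, p_1p_2) > 1$ when $n$ is even). The difference is in how you certify coverage for the non-wreath case. The paper observes that \emph{one or two} cycles already suffice: if no single cycle is coprime to $p_1p_2$, then each cycle is divisible by $p_1$ or $p_2$, and since (by assumption) neither prime divides all cycle lengths there is a cycle divisible by $p_1$ but not $p_2$ and another divisible by $p_2$ but not $p_1$, whose sum is coprime to $p_1p_2$. You instead prove a general Boolean-cube lemma (via the pairing $v \leftrightarrow v \oplus e_i$ to bound $|A|, |B| \le 2^{\ell-1}$ and the observation $\{0,\mathbf{1}\} \subset A \cap B$) producing some proper nonempty sub-sum of cycles coprime to $p_1p_2$. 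Both are correct; your lemma is a robust general-purpose tool that works for any number of primes and any vector, while the paper's observation is sharper for this instance, pinning down a cycle or a pair of cycles and keeping the proof shorter and more concrete. Either way the sub-sum $k$ satisfies $1 \le k \le n-1$ and, since $p_1p_2 \mid n$, coprimality is preserved under $k \mapsto n-k$, so one of the two lands in $K$ and the argument closes.
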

\begin{proof}
    A covering family is given by $\cC_1 \cup \cC_2$, where
    \begin{align*}
        & \cC_1 = \{(S_k \times S_{n-k}) \cap G : 1 \le k < n/2,\ \gcd(k, p_1p_2) = 1\} \\
        & \cC_2 = \{(S_{p_1} \wr S_{n/p_1}) \cap G, (S_{p_2} \wr S_{n/p_2}) \cap G\}.
    \end{align*}
    Let $\pi \in G$. For $i=1,2$, if all the cycles of $\pi$ have length divisible by $p_i$ then $\pi$ is contained in a conjugate of $(S_{p_i} \wr S_{n/p_i}) \cap G \in \cC_2$. Otherwise $\pi$ must have an invariant set of cardinality coprime with $p_1p_2$, given by either a cycle of $\pi$ or a sum of two cycles, so $\pi$ is contained in a conjugate of some $(S_k \times S_{n-k}) \cap G \in \cC_1$.
\end{proof}

The following new observation is partly inspired by Mar\'oti's construction described in \cite{BPS2014}*{Section~2}.
Recall that $\phi(n)$ denotes Euler's totient function and $\omega(n)$ denotes the number of prime factors of $n$.

\begin{proposition}
    \label{prop:new-upper-2}
    Assume $G = S_n$ and $n$ is odd or $G = A_n$ and $n$ is even.
    Suppose $n$ is divisible by $3$. Then
    \[
        \gamma(G) \le \begin{cases}
            n/6 + \phi(n)/2 + \omega(n)&\text{if}~ G=A_n, ~n~ \text{even},\\
            n/6 + \phi(n)/2 + \omega(n) + 1&\text{if}~ G=S_n, ~n~ \text{odd}.\\
            
        \end{cases}
    \]
\end{proposition}
\begin{proof}
    A covering family is given by $\cC_1 \cup \cC_2 \cup \cC_3$ where
    \begin{align*}
        & \cC_1 = \{(S_k \times S_{n-k}) \cap G : 1 \le k \le n/2,\ 3 \mid k\}, \\
        & \cC_2 = \{(S_k \times S_{n-k}) \cap G : 1 \le k \le n/2,\ \gcd(n, k) = 1\}. \\
        & \cC_3 = \{(S_p \wr S_{n/p}) \cap G : p \mid n\},
    \end{align*}
    together with $\cC_4 = \{A_n\}$ if $G = S_n$.
    Note $|\cC_1| \le n/6$, $|\cC_2| = \phi(n)/2$, $|\cC_3| = \omega(n)$, $|\cC_4| \le 1$.
    To see why this works, let $\pi \in G$, and if $G = S_n$ assume $\pi \notin A_n$.
    Then $\pi$ has an even number of cycles.
    First suppose $\pi$ has only two cycles, say of lengths $k$ and $n-k$ where $k \le n/2$. If $\gcd(k, n) > 1$ then $\pi$ is contained in a conjugate of $(S_p \wr S_{n/p}) \cap G \in \cC_3$ where $p$ is any prime dividing $\gcd(k, n)$.
    Otherwise $\pi$ is contained in a conjugate of $(S_k \times S_{n-k}) \cap G \in \cC_2$.
    Now suppose $\pi$ has at least four cycles. Let $a, b, c$ be the lengths of three of the cycles. Then some number $k \in \{a, b, c, a+b, a+c, b+c, a+b+c\}$ is divisible by $3$, and $\pi$ is contained in a conjugate of $(S_k \times S_{n-k}) \cap G \in \cC_1$.
\end{proof}

\section{Cycle types covered by transitive subgroups}

To reduce to additive combinatorics we need to argue that transitive subgroups cannot help much.
For this we need to know when elements with a small number of cycles are covered by a transitive subgroup.
This is the content of the next two lemmas.

\begin{lemma}
    \label{lem:imprimitive}
    Let $\pi \in S_n$ have cycle type $(x_1, \dots, x_k)$, where $k > 1$.
    \begin{enumerate}
        \item If $\gcd(x_1, \dots, x_k) > 1$, then $\pi$ is contained in an imprimitive transitive subgroup conjugate to $S_p \wr S_{n/p}$ where $p$ is any prime divisor of $\gcd(x_1, \dots, x_k)$.
        \item If $\gcd(x_1, \dots, x_k) = 1$ then $\pi$ is contained in an imprimitive transitive subgroup if and only if there is a partition $\Pi$ of $\{1, \dots, k\}$, neither discrete nor trivial, such that if $b = \gcd(\sum_{i \in C} x_i : C \in \Pi)$ then for each cell $C \in \Pi$ and $i \in C$ we have $(\sum_{j \in C} x_j) / b \mid x_i$.
        In this case $\pi$ is contained in a conjugate of $S_b \wr S_{n/b}$.
    \end{enumerate}

    In particular, if $2 \le k \le 4$ and $\gcd(x_1, \dots, x_k) = 1$ then $\pi$ is contained in an imprimitive transitive subgroup if and only if either $k=3$ and for some permutation of $x_1, x_2, x_3$ we have
    \[
        (x_1 + x_2) / \gcd(x_3, n) \mid x_1, x_2,
    \]
    or $k = 4$ and for some permutation of $x_1, x_2, x_3, x_4$ we have either
    \begin{enumerate}[(a)]
        \item $(x_1 + x_2 + x_3) / \gcd(x_4, n) \mid x_1, x_2, x_3$,
        \item $(x_1 + x_2) / \gcd(x_3, x_4, n) \mid x_1, x_2$, or
        \item $(x_1 + x_2) / \gcd(x_1 + x_2, n) \mid x_1, x_2$ and $(x_3 + x_4) / \gcd(x_1 + x_2, n) \mid x_3, x_4$.
    \end{enumerate}
\end{lemma}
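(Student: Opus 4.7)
The plan is to deduce part (1) from a simple block construction on each cycle, handle part (2) via a correspondence between imprimitive block systems preserved by $\pi$ and partitions of its cycles, and then specialise to $k\le 4$ by enumerating the possible partitions. For part (1), label the points of each cycle of length $x_i$ by $\Z/x_i\Z$ so that $\pi$ acts by $+1$. If $p \mid x_i$, the cosets of the unique order-$p$ subgroup of $\Z/x_i\Z$ partition this cycle into $x_i/p$ blocks of size $p$ that $\pi$ permutes cyclically. Taking the union over all cycles exhibits $\pi$ inside a conjugate of $S_p \wr S_{n/p}$.

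For part (2), suppose first that $\pi$ lies in an imprimitive group preserving some block system $\mathcal{B}$ of block size $b_0$, with $1<b_0<n$. The orbits of $\pi$ on $\mathcal{B}$ are each cyclic, and the union of blocks in an orbit is $\pi$-invariant, hence a union of cycles of $\pi$. This grouping defines a partition $\Pi$ of $\{1,\dots,k\}$. For a cell $C$ with block-orbit of length $m_C$, we get $\sum_{i\in C}x_i = m_C b_0$, and each $\pi$-cycle in the orbit visits the blocks in cyclic order, so $m_C\mid x_i$ for every $i\in C$. This $\Pi$ is not discrete (else $b_0\mid \gcd(x_i)=1$) and not trivial (else $b_0 = n$). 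Setting $b := \gcd_{C}\sum_{i\in C}x_i$, which is a multiple of $b_0$, the quantity $(\sum_{i\in C}x_i)/b$ divides the original $m_C$ and hence divides each $x_i$, giving the stated condition.

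Conversely, given such $\Pi$ and $b$, I build a block system of block size $b$ cell by cell. For each cell $C$ put $m_C := (\sum_{j\in C}x_j)/b$ and $t_i := x_i/m_C$, both integers by hypothesis; label each cycle $i\in C$ by $\Z/x_i\Z$ and place the point with label $j$ in block number $j\bmod m_C$. Each such block collects $t_i$ points from cycle $i$, hence has total size $\sum_{i\in C} t_i = b$, and $\pi$ sends block $r$ to block $r+1$. Gluing the cells together gives a $\pi$-invariant block system of size $b$ with $1<b<n$, realising $\pi$ in a conjugate of $S_b \wr S_{n/b}$.

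For the enumeration at the end of the statement, $k=2$ admits no non-trivial, non-discrete partition of $\{1,2\}$; for $k=3$ the only permissible partition shape is singleton/doubleton, and for $k=4$ there are three shapes (singleton/tripleton, two singletons/doubleton, two doubletons). Writing out the condition $(\sum_{j\in C}x_j)/b \mid x_i$ in each case and using $n=\sum_i x_i$ to convert a subset sum's gcd into $\gcd$ of the complementary subset sum with $n$, one reads off the stated criteria (a), (b), (c). The main obstacle is setting up the reverse direction of part (2) so that the cell-by-cell block construction fits together coherently and yields blocks of size exactly $b$; the rest is careful bookkeeping.
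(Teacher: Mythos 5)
Your proposal is correct and takes essentially the same route as the paper: the paper quotes a well-known characterization of when $\pi$ lies in an imprimitive subgroup (in terms of a $\pi$-invariant partition of $\Omega$ and divisibility of cycle lengths), while you derive that characterization from scratch by matching orbits on blocks to groupings of cycles, and both then organize by partition shape and replace each $b_0$ by the gcd $b$. The one step you assert but leave to ``bookkeeping'' is that the constructed $b$ really satisfies $1 < b < n$; the paper makes this explicit (a nonsingleton cell forces $b > 1$, and nontriviality of $\Pi$ forces $b < n$), and you should include it since it is exactly what guarantees the resulting wreath product is a proper transitive subgroup.
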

\begin{proof}(Cf.~\cite{BPS2014}*{Section~4}.)
    It is well-known and easy to check that $\pi$ is contained in an imprimitive transitive subgroup if and only if there is a nontrivial proper divisor $b \mid n$, a partition $(d_1, \dots, d_m)$ of $n/b$, and a partition of $\Omega$ into $\pi$-invariant sets $A_1, \dots, A_m$ ($m \le k$) such that $|A_i| = d_i b$ and every cycle of $\pi \mid A_i$ is divisible by $d_i$. (See, e.g., \cite{BPS2013}*{Section~4} or \cite{EFK}*{p.~4}.)
    The integer $b$ is the block size in the invariant partition and $(d_1, \dots, d_m)$ is the cycle type of the permutation $\bar \pi$ induced on the set of blocks.

    If $m = 1$ then we must find a nontrivial proper divisor $b$ of $n$ such that every cycle of $\pi$ is divisible by $d_1 = n / b$.
    Similarly, if $m = k$, then we must find a nontrivial proper divisor $b$ of $n$ such that every cycle of $\pi$ is divisible by $b$.
    In either case it is possible if and only if $\gcd(x_1, \dots, x_k) > 1$. This gives condition (1).

    Now suppose $1 < m < k$.
    Then we must collect the cycles of $\pi$ into sets $A_1, \dots, A_m \subseteq \{1,\dots,n\}$, and this defines a partition $\Pi$ of $\{1, \dots, k\}$ that is neither discrete nor trivial. Moreover we must find a nontrivial common divisor $b$ of $|A_1|, \dots, |A_m|$ such that every cycle of $\pi$ in $A_i$ is divisible by $d_i = |A_i| / b$. It suffices to test
    \[
        b = \gcd(|A_1|, \dots, |A_m|) = \gcd\br{\sum_{i \in C} x_i : C \in \Pi}.
    \]
    The condition that every cycle of $\pi$ in $A_i$ is divisible by $d_i$ is equivalent to requiring that $(\sum_{j \in C} x_j) / b \mid x_i$ for each cell $C \in \Pi$ and element $i \in C$.
    Moreover, if this condition holds, then $b$ is guaranteed to be a nontrivial proper divisor of $n$.
    Indeed, since $\Pi$ is neither discrete nor trivial, there is some cell $C$ such that $1 < |C| < k$. 
    For this cell, the condition that $(\sum_{j \in C} x_j) / b \mid x_i$ implies $b > 1$.
    On the other hand, we also have $b \le \sum_{j \in C} x_j < n$.
    This gives condition (2).
\end{proof}

The following lemma extends \cite{BPS2019}*{Lemma~7.1}, and follows \cite{GMPS2016}*{Theorem~1.1} and \cite{GMPS2015}*{Theorem~1.5} and a careful inspection of the tables. The restriction $n > 36$ enables us to keep the list reasonably short, but it can be omitted at the expense of including a large number of exceptional cases.

\begin{lemma}
    \label{lem:primitive}
    Assume $n > 36$.
    Let $\pi \in S_n$ have cycle type $\tau = (x_1, \dots, x_k)$,
    where $2 \le k \le 4$ and $\gcd(x_1, \dots, x_k) = 1$.
    If $\pi$ is contained in a primitive transitive subgroup smaller than $A_n$ then one of the following holds (throughout $q$ denotes a prime power and $p$ denotes a prime):\footnote{%
        We have excluded the case listed as line 3 of \cite{BPS2019}*{Table~1}, which we suspect was copied from line 4 of \cite{GMPS2015}*{Table~5} ignoring the restriction $i \mid p^{d-1} -1$.
    }

    Case $k=2$:
    \begin{enumerate}
        \item $n = q$, $\tau = (1, q - 1)$,
        \item $n = q+1$, $\tau = (1, q)$,
    \end{enumerate}

    Case $k=3$:
    \begin{enumerate}[resume]
        \item $n = q$, $q$ odd, $\tau = (1, (q-1)/2, (q-1)/2)$,
        \item $n = p^2$, $p$ odd, $\tau = (1, p-1, p(p-1))$,
        \item $n = (q^d-1) / (q-1)$, $d = d_1 + d_2$, $\gcd(d_1, d_2) = 1$,
        \[
            \tau = \br{\frac{q^{d_1} - 1}{q-1}, \frac{q^{d_2} - 1}{q - 1}, \frac{(q^{d_1} - 1)(q^{d_2} - 1)}{q-1}},
        \]
    \end{enumerate}

    Case $k=4$:
    \begin{enumerate}[resume]
        \item $n = q \equiv 1 \pmod 3$, $\tau = (1, (q-1) / 3, (q-1) / 3, (q-1) / 3)$,
        \item $n = 2^d$, $d = d_1+d_2$, $\tau = (1, 2^{d_1}-1, 2^{d_2}-1, (2^{d_1} -1)(2^{d_2} -1))$,
        \item $n = 2^d$, $\tau = (1, 1, 2^{d-1}-1, 2^{d-1}-1)$,
        \item $n = m^2$, $\tau = (k_1 k_2, k_1 (m-k_2), (m-k_1) k_2, (m-k_1)(m-k_2))$
        for some $k_1, k_2 \ge 1$ such that $k_1, k_2, m - k_1, m - k_2$ are pairwise coprime,
        \item $n = (q^d-1) / (q-1)$, $q$ odd, $d = d_1 + d_2$, $\gcd(d_1, d_2) = 1$,
        \[
            \tau = \br{\frac{q^{d_1} - 1}{q-1}, \frac{q^{d_2} - 1}{q - 1}, \frac{(q^{d_1} - 1)(q^{d_2} - 1)}{2(q-1)}, \frac{(q^{d_1} - 1)(q^{d_2} - 1)}{2(q-1)}}.
        \]
    \end{enumerate}
\end{lemma}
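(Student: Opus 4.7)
The plan is to reduce the statement to two existing classifications: \cite{GMPS2016}*{Theorem~1.1}, which lists the primitive groups (smaller than $A_n$) containing an element with $2$ or $3$ cycles, and \cite{GMPS2015}*{Theorem~1.5}, which does the same for $4$ cycles. Both theorems come with tables enumerating the possible pairs (group, element). Given $\pi$ with $2 \le k \le 4$ cycles and $\gcd(\tau) = 1$ lying in some primitive $G < A_n$, I would go through each row of those tables, read off the resulting degree $n$ and cycle type $\tau$, drop the rows for which $\gcd(\tau) > 1$, and then invoke $n > 36$ to eliminate the finitely many small-degree sporadic cases (Mathieu groups, flag actions of $\opr{PSL}_d(q)$ in tiny rank or characteristic, exceptional groups of Lie type of small degree, and so on). What remains should be exactly the infinite families (1)--(10).

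For $k = 2$ this almost falls out of classical theorems of Jordan: a primitive group containing a cycle that fixes one point must be either $G \le \opr{AGL}_1(q)$ with $n = q$ (giving (1)) or $G \le \opr{PGL}_2(q)$ on the projective line with $n = q+1$ (giving (2)). For $k = 3$ I would match the three surviving families (3)--(5) to the three generic table entries: the affine-linear case yielding $\br{1, (q-1)/2, (q-1)/2}$, a plane-affine case yielding $(1, p-1, p(p-1))$, and the natural action of a group between $\opr{PSL}_d(q)$ and $\opr{P\Gamma L}_d(q)$ on $(q^d-1)/(q-1)$ points, where an element preserving a direct-sum decomposition $\F_q^d = \F_q^{d_1} \oplus \F_q^{d_2}$ gives (5). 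The coprimality hypothesis $\gcd(\tau) = 1$ is precisely what forces $\gcd(d_1, d_2) = 1$.

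The real work will be $k = 4$. Here I expect to spend most of the time reading \cite{GMPS2015}*{Tables~1--5} and matching the entries to (6)--(10): (6) from affine groups whose multiplicative structure contains a cube root of unity; (7) and (8) from two distinct cycle shapes realizable inside $\opr{AGL}_d(2)$, one from a direct-sum decomposition and one of reflection type giving equal blocks; (9) from the product action of $S_m \wr S_2$ on $\{1,\dots,m\}^2$, where $\gcd(\tau) = 1$ forces $k_1, k_2, m-k_1, m-k_2$ to be pairwise coprime; and (10) as the odd-$q$ analogue of (5), obtained from an extra order-$2$ automorphism that splits the largest cycle in two equal halves. The footnote about \cite{BPS2019}*{Table~1} flags a row that was copied while ignoring the constraint $i \mid p^{d-1}-1$ from \cite{GMPS2015}*{Table~5}, and I would justify its omission by checking that this constraint eliminates every resulting cycle type. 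The main obstacle is not a single deep argument but careful bookkeeping: translating each entry of the GMPS tables (phrased in terms of group-theoretic data rather than cycle types) into an explicit cycle partition, confirming that $\gcd(\tau) = 1$ holds only in the cases listed, and checking that $n > 36$ really does kill every sporadic exception.
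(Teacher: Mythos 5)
Your proposal matches the paper's approach exactly: the paper gives no detailed proof of this lemma, stating only that it follows from \cite{GMPS2016}*{Theorem~1.1} and \cite{GMPS2015}*{Theorem~1.5} together with a careful inspection of the accompanying tables, with the hypothesis $n > 36$ used to discard the finitely many sporadic exceptions. The footnote about the omitted row of \cite{BPS2019}*{Table~1} is handled in the same way you describe, by noting the constraint $i \mid p^{d-1}-1$ from \cite{GMPS2015}*{Table~5} was dropped in transcription.
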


\section{Some exact lower bounds}

Let us now use these lemmas to obtain lower bounds for normal covering numbers.
We begin with the cases in which we can give exact bounds.

\begin{proposition}
    \label{prop:S_2p-lower}
    Assume $n > 36$ has the form $2^k$ or $2p$ ($p$ prime). Then
    \[
        \gamma(S_n) = \floor{n/4} + 1.
    \]
\end{proposition}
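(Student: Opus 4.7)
The upper bound $\gamma(S_n) \le \floor{n/4} + 1$ is immediate from \Cref{prop:basic-upper-bounds}(case~3); the covering family of size $\floor{n/4}+1$ consists of the intransitive subgroups $S_k \times S_{n-k}$ for odd $k \in [1, n/2)$ together with $S_2 \wr S_{n/2}$. For the matching lower bound, let $\cC$ be a normal covering of $S_n$ by maximal subgroups and decompose $\cC = \{A_n\}^? \sqcup \cC_i \sqcup \cC_t$ into a possible copy of $A_n$, the intransitive maximal subgroups $S_k \times S_{n-k}$ ($1 \le k < n/2$), and the remaining transitive maximal subgroups. Every $n$-cycle is an odd permutation (as $n$ is even), contained in no intransitive subgroup and not in $A_n$, so $|\cC_t| \ge 1$; it therefore suffices to show $|\cC_i| \ge \floor{n/4}$.

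The core of the argument is a careful examination of which 3-cycle types $(a, b, c)$ with $a + b + c = n$ and $\gcd(a, b, c) = 1$ are covered by which kind of subgroup, ruling out transitive covers. By \Cref{lem:imprimitive}, an imprimitive maximal subgroup $S_a \wr S_b$ of $S_n$ (with $n = ab$) covers such a type only when for some 2+1 partition $\{x, y\}, \{z\}$ of $\{a, b, c\}$ the reduced sum $(x+y)/\gcd(x,y)$ divides $n$; for $n = 2^k$ this forces the reduced sum to be a power of $2$, and for $n = 2p$ to lie in $\{2, p, 2p\}$. By \Cref{lem:primitive}, because $n > 36$ the only primitive case applicable to $n = 2^k$ or $n = 2p$ is case~(5) with $d_1 = d_2 = 1$, which requires $n - 1$ to be a prime power and yields only the single cycle type $(1, 1, n-2)$. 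Together these facts show that transitive subgroups can cover only a very restricted set of 3-cycle types with $\gcd = 1$.

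The plan is then to pick a family $\mathcal{F}$ of $\floor{n/4}$ 3-cycle types that intransitive subgroups can cover only inefficiently. A natural first candidate is $\sigma_\ell = (1, \ell, n-1-\ell)$ for $\ell \in [1, n/2-1]$ with $\ell+1 \nmid n$; by the analysis above each such $\sigma_\ell$ is covered only by the three intransitive subgroups $S_1 \times S_{n-1}$, $S_\ell \times S_{n-\ell}$ and $S_{\ell+1} \times S_{n-\ell-1}$. A minimum edge-cover argument on the resulting path graph---decomposed into intervals determined by the divisors of $n$---forces $|\cC_i| \ge \floor{n/4}$, and combined with $|\cC_t| \ge 1$ this gives the desired $|\cC| \ge \floor{n/4} + 1$.

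The main obstacle is that $S_1 \times S_{n-1}$ covers \emph{every} $\sigma_\ell$, voiding the path argument if it lies in $\cC_i$; I would handle this by running a parallel analysis on the fixed-point-free family $(2, \ell, n-2-\ell)$ with $\ell$ odd, for which $S_1 \times S_{n-1}$ is not a cover and $S_2 \times S_{n-2}$ plays the role of the universal subgroup. A similar alternating argument then extracts $\floor{n/4}$ essentially distinct intransitive subgroups in either case. Additional minor bookkeeping is required for the sporadic $n$ in which \Cref{lem:primitive} case~(5) does apply (namely $n = 2^k$ with $n-1$ a Mersenne prime, or $n = 2p$ with $2p-1$ prime), where a single exceptional primitive subgroup in $\cC_t$ may discharge the type $(1,1,n-2)$ without contributing to the path cover.
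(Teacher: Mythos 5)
Your overall architecture matches the paper's (upper bound from \Cref{prop:basic-upper-bounds}, lower bound via 3-part cycle types, with \Cref{lem:imprimitive} and \Cref{lem:primitive} controlling the transitive covers), but the core counting step is not actually carried through, and the specific plan as described would not reach the target. The crucial move in the paper's proof is to \emph{adapt the family to the unknown cover}: it sets $x_1$ to be the smallest odd number in $[1,n/2-1]$ \emph{not} in $K$, and works with partitions $(x_1, x_2, x_3)$ with $x_2$ even and $x_3 > n/2$ odd. This choice automatically disables the ``hub'' problem, since $S_{x_1}\times S_{n-x_1}\notin\cC$ by construction, and it brings both parities of $k$ into play (the constraint is $x_2\in K$ or $x_1+x_2\in K$, one even, one odd). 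Your fixed families $(1,\ell,n-1-\ell)$ and $(2,\ell,n-2-\ell)$ do not do this: the hub ($S_1\times S_{n-1}$, resp.\ $S_2\times S_{n-2}$) may well lie in $\cC_i$, and your proposed fix of switching from the first family to the second merely relocates the hub without removing it. Moreover, the second family constrains only odd $\ell$ with edges $\{\ell,\ell+2\}$, so a vertex-cover bound on that path is on the order of $n/8$, nowhere near $\floor{n/4}$.

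There is a second, subtler gap. You declare the sub-goal $|\cC_i|\ge\floor{n/4}$, but this is strictly stronger than what a direct counting of 3-part types gives: even the paper's more careful accounting only yields $|K|\ge\floor{n/4}-1$. The remaining ``$+1$'' is obtained by a separate extremal-case analysis --- ruling out the possibility that $|K|=\floor{n/4}-1$ with $\cC$ containing a single transitive subgroup --- which invokes \Cref{lem:primitive} again plus Jordan's bound on the minimal degree of a $2$-transitive group. Your proposal has no analogue of this step. Finally, your treatment of the transitive exceptions is incomplete: for instance, for the family $(2,\ell,n-2-\ell)$ the imprimitive condition of \Cref{lem:imprimitive}(2) does bite at $\ell=n/2-1$ when $4\mid n$ (giving $(2,\,n/2-1,\,n/2-1)$ inside $S_2\wr S_{n/2}$), and for $n=2p$ at $\ell=p-2$; and your reduction of the primitive cases to a single $(1,1,n-2)$ would need the observation (used in the paper via a parity argument) that in \Cref{lem:primitive}(5) the two ``small'' parts are odd and the ``large'' part is even, which is what rules out the partitions in $P$ rather than just $d_1=d_2=1$.
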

\begin{proof}
    The upper bound was proved in \Cref{prop:basic-upper-bounds}, so it suffices to prove the lower bound.

    Let $\cC$ be a normal covering consisting of maximal subgroups.
    Let $K$ be the set of integers $k \in [1, n/2-1]$ such that $\cC$ contains a conjugate of $S_k \times S_{n-k}$.
    We claim that $|K| \ge \floor{n/4} - 1$.
    If $K$ contains every odd number in the range $[1, n/2-1]$ then this is clear.
    Otherwise, let $x_1$ be the smallest odd element of $[1, n/2-1] \sm K$ and let $P$ be the set of $3$-partitions of the form $(x_1, x_2, x_3)$ with $x_2$ even and $x_3 > n/2$ odd.
    Note then that $\gcd(x_1, n) = \gcd(x_3, n) = 1$, $\gcd(x_2, n) = 2$, and $\gcd(x_1, x_2, x_3) = 1$.

    We claim that no $(x_1, x_2, x_3) \in P$ is covered by a transitive subgroup.
    First suppose $(x_1, x_2, x_3)$ is caught by condition \Cref{lem:imprimitive}(2).
    Since $\gcd(x_1, n) = \gcd(x_3, n) = 1$ and $\gcd(x_2, n) = 2$, the only relevant permutation of $x_1, x_2, x_3$ is $x_1, x_3, x_2$, and the condition is $(x_1 + x_3) / 2 \mid x_1, x_3$, but this is also impossible because $x_1 < n/2 < x_3$, so $(x_1 + x_3)/2 > x_1$.
    Now suppose $(x_1, x_2, x_3)$ is caught by \Cref{lem:primitive}(5) (the cases (3, 4) are irrelevant, since $n$ is even).
    We must have $n = (q^d-1) / (q-1)$ for $q$ an odd prime power, even $d = d_1 + d_2 \ge 2$, where $\gcd(d_1, d_2) = 1$.
    Since $x_3 > n/2$ we must have $x_1 = (q^{d_1} - 1) / (q-1)$, $x_2 = (q^{d_2} - 1) / (q-1)$, and $x_3 = (q^{d_1} - 1)(q^{d_2} - 1) / (q-1)$. But then $x_1$ and $x_2$ are odd and $x_3$ is even, contrary to construction.

    It follows that, for every partition $(x_1, x_2, x_3) \in P$, either the even number $x_2 < n/2$ or the odd number $n - x_3 > x_1$ must be in $K$, and moreover $K$ contains all odd numbers in $[1, x_1-2]$, so
    \[
        |K| \ge |P| + (x_1 - 1) / 2.
    \]
    Each partition $(x_1, x_2, x_3) \in P$ is determined uniquely by $x_2 < n/2 - x_1$, so
    \[
        |P| = \floor{(n/2 - x_1 - 1)/2} = \floor{n/4} - (x_1 + 1)/2.
    \]
    Therefore $|K| \ge \floor{n/4} - 1$, as claimed.

    Now in addition to $\{S_k \times S_{n-k} : k \in K\}$, $\cC$ must contain at least one transitive subgroup (containing $n$-cycles), so $|\cC| \ge |K| + 1$.
    To finish the proof it suffices to rule out the possibility that $|K| = \floor{n/4} - 1$ and $\cC$ contains a single transitive subgroup $G$.
    This subgroup $G$ would have to contain both an $n$-cycle and an element of type $(k, n-k)$ for every $k \notin K$, including $k = x_1$.
    In particular $G \ne A_n$ since $A_n$ does not contain $n$-cycles (as $n$ is even).
    Since $x_1$ is odd, \Cref{lem:imprimitive} implies $G$ must be primitive.
    By \Cref{lem:primitive}, $G$ cannot contain elements of type $(k, n-k)$ for any odd $k \in (1, n/2)$, so $x_1 = 1$ and $K$ must be precisely the set of odd numbers in the range $(1, n/2)$.
    But then $6 \notin K$, so $G$ must contain an element of type $(6, n-6)$, and since $3 \nmid n$ a power of this element has type $(3, 3, 1, \dots, 1)$, which contradicts \cite{dixon-mortimer}*{Theorem~5.4A(i)} (Jordan's bound for the minimal degree of a $2$-transitive group).
\end{proof}

To go further we need some tools from additive combinatorics, namely the Cauchy--Davenport Theorem and some extensions of it.

\begin{theorem}[Cauchy--Davenport]
    Let $A, B \subset \Zmod{p}$ be nonempty sets. Then
    \[
        |A+B| \ge \min(|A| + |B| - 1, p).
    \]
\end{theorem}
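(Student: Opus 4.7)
The plan is to prove this via induction on $|B|$, following the classical Dyson $e$-transform argument. First I would dispose of the case $|A| + |B| > p$ by pigeonhole: for any $c \in \Zmod p$, the sets $A$ and $c - B$ have sizes summing to more than $p$, so they must intersect, which gives $c \in A + B$; hence $A + B = \Zmod p$ and the claimed bound holds with $|A+B| = p$. From here on we may assume $|A| + |B| \le p$ and it suffices to show $|A+B| \ge |A| + |B| - 1$.

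The base case $|B| = 1$ is immediate, since $A + B$ is then a translate of $A$. For the inductive step with $|B| \ge 2$, I would introduce the transform: for any $e \in \Zmod p$, set $A_e = A \cup (e + B)$ and $B_e = B \cap (A - e)$. The identity $|X \cup Y| + |X \cap Y| = |X| + |Y|$ applied to $X = A$, $Y = e + B$, combined with the observation $A \cap (e + B) = e + B_e$, yields $|A_e| + |B_e| = |A| + |B|$. A direct check also gives $A_e + B_e \subset A + B$: any element $a' + b'$ with $a' \in A_e$, $b' \in B_e$ is either in $A + B$ (when $a' \in A$) or equals $(e + b') + c$ for some $c \in B$, where $e + b' \in A$ by definition of $B_e$.

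The crux is to choose $e$ so that $B_e$ is nonempty but strictly smaller than $B$, which amounts to finding $e \in A - B$ with $e + B \not\subset A$. If no such $e$ existed, then for every $a \in A$ and $b \in B$ the translate $a - b + B$ would lie in $A$; since $|B| \ge 2$ and $p$ is prime, $B - b$ contains a nonzero element which generates $\Zmod p$, forcing $A = \Zmod p$, contradicting $|A| \le p - |B| < p$. Once such $e$ is chosen, we have $B_e$ nonempty with $|B_e| < |B|$, and $|A_e| + |B_e| = |A| + |B| \le p$, so the inductive hypothesis applied to $(A_e, B_e)$ gives $|A_e + B_e| \ge |A_e| + |B_e| - 1 = |A| + |B| - 1$, hence $|A + B| \ge |A| + |B| - 1$. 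The main obstacle is verifying that a valid $e$ exists; the size and containment identities for the $e$-transform are routine bookkeeping.
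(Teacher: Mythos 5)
Your proof is correct and complete. The paper states the Cauchy--Davenport theorem as a classical result and does not give a proof, so there is nothing to compare against; your argument via the Dyson/Davenport $e$-transform is the standard textbook proof, and every step (the pigeonhole reduction when $|A|+|B|>p$, the size identity $|A_e|+|B_e|=|A|+|B|$, the containment $A_e+B_e\subset A+B$, and the existence of a valid $e$ when $|B|\ge 2$ using primality of $p$) checks out.
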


\begin{proposition}
    \label{prop:A_p-lower}
    Let $p > 36$ be a prime not of the form $(q^d  - 1) / (q - 1)$.
    Then
    \[
        \gamma(A_p)
        =
        \floor{(p+1)/3}.
    \]
\end{proposition}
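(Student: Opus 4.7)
I would prove matching upper and lower bounds separately. The upper bound is immediate: $\floor{(p+1)/3}$ equals $(p-1)/3$ when $p \equiv 1 \pmod 3$ and equals $\floor{p/3} + 1$ when $p \equiv 2 \pmod 3$, so the bound comes from \Cref{prop:new-upper-1} in the first case and \Cref{prop:basic-upper-bounds} in the second.

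For the lower bound, I would let $\cC$ be a normal covering by maximal subgroups. Since $p$ is prime, $S_p$ has no imprimitive transitive subgroups, so every transitive member of $\cC$ is primitive; at least one is required to cover the $p$-cycles. Let $K \subseteq \{1, \ldots, (p-1)/2\}$ index the $k$ with $(S_k \times S_{p-k}) \cap A_p \in \cC$, and set $X = \{1, \ldots, p-1\} \sm (K \cup (p-K))$, which I view as a symmetric subset of $\Zmod p^*$. \Cref{lem:primitive}, together with the hypothesis that $p$ is not of the form $(q^d-1)/(q-1)$, tells me that the only three-cycle type of $A_p$ contained in a primitive proper subgroup is $(1, (p-1)/2, (p-1)/2)$. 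Therefore every other three-cycle type $(x_1, x_2, x_3)$ of $A_p$ forces some $x_i \in K \cup (p-K)$, i.e., any $x, y, z \in X$ with $x + y + z \equiv 0 \pmod p$ must satisfy $\{x, y, z\} = \{1, (p-1)/2, (p-1)/2\}$ or its negative. Cycle types with $\ell \ge 5$ cycles are handled by the splitting from the proof of \Cref{prop:new-upper-1}: taking $x = x_1 > 1$, $y = x_2 + x_3$, and $z = x_4 + \cdots + x_\ell \ge 2$ rules out the exceptional multiset and forces some subset sum outside $X$.

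The main technical obstacle will be bounding $|X|$, which I would handle via Cauchy--Davenport in two cases. If either $1 \notin X$ or $(p-1)/2 \notin X$, then no exceptional triple lies entirely in $X$, so $X$ is genuinely sum-free; since $0 \in X + X$ for symmetric $X$, inclusion--exclusion gives $|X| + |X+X| \le p$, which combined with $|X + X| \ge 2|X| - 1$ yields $|X| \le (p+1)/3$. If instead $\{\pm 1, \pm (p-1)/2\} \subseteq X$, then a direct check shows that $(X + X) \cap X$ equals the four-element set $\{\pm 1, \pm (p-1)/2\}$ (every element of this intersection must come from an exceptional triple, of which there are essentially two), and the same argument then gives $3|X| - 1 \le p + 4$, i.e., $|X| \le (p+5)/3$. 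Using that $|X|$ is even by symmetry, this forces $|X| \le m + 2$ when $p = 3m+1$ and $|X| \le m+1$ when $p = 3m+2$. Consequently
\[
    |\cC| \ge |K| + 1 = \frac{p-1-|X|}{2} + 1 \ge \floor{(p+1)/3},
\]
completing the lower bound. The exceptional cycle type itself is covered either by a primitive subgroup such as $\AGL_1(p) \cap A_p$ (in the second case), or by the intransitive subgroup indexed by $k \in \{1, (p-1)/2\}$ (which is what puts us in the first case), so both branches are internally consistent with the claimed bound.
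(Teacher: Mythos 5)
Your proof is correct and follows essentially the same approach as the paper: index the intransitive subgroups by $K$, pass to the complement $X$ viewed symmetrically in $\Zmod p$, observe that \Cref{lem:primitive} forces $X$ to be nearly sum-free apart from the single exceptional triple, and bound $|X|$ via Cauchy--Davenport. Two small remarks: your paragraph about cycle types with $\ell \geq 5$ cycles is superfluous --- for the lower bound you only need to \emph{use} the covering property on three-cycle permutations, not re-verify it for the others --- and your two-case split is avoidable (the paper absorbs both cases into a single inequality $|X| \leq (p+5)/3$ and then applies integrality to $|\cC|$ rather than evenness to $|X|$, but the numerology comes out the same either way).
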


\begin{remark}
    If $p$ is an odd prime of the form $(q^d - 1) / (q-1)$ then either $p$ is the Fermat prime $q+1$ or $d \ge 3$.
    Since $(q^d - 1) / (q - 1) > q^{d-1}$, it follows immediately that the number of primes up to $N$ excluded by the proposition is $O(N^{1/2})$ at most.
\end{remark}

\begin{proof}
    The upper bound was proved in \Cref{prop:basic-upper-bounds,prop:new-upper-1}, so it suffices to prove the lower bound.

    Let $\cC$ be a normal covering consisting of maximal subgroups of $A_p$.
    Let $K \subset \{1, \dots, p-1\}$ be the symmetric set of integers such that $k \in K \iff (S_k \times S_{n-k}) \cap A_n \in \cC$.
    Then by \Cref{lem:primitive} for any $3$-partition $(x, y, z)$ of $p$ we have
    \begin{equation}
        \label{eq:K-key-property}
        \begin{aligned}
            &\{x,y,z\} \cap K = \emptyset \\
            &\qquad\implies \exists \sigma \in S_3 : (x, y, z)^\sigma = (1, (p-1)/2, (p-1)/2).
        \end{aligned}
    \end{equation}
    It follows that $X = \{1, \dots, p-1\} \sm K \bmod p$ is a symmetric subset of $\Zmod p$ with the property
    \begin{equation}
        \label{eq:X-nearly-sum-free}
        x,y,z \in X ~\text{and} ~ x+y+z = 0 \implies x, y, z \in \{\pm1, \pm(p-1)/2\}.
    \end{equation}
    Indeed, if $x, y, z \in \{1, \dots, p-1\}$ and $x,y,z \in X \pmod p$ and $x+y+z \equiv 0 \pmod p$ then either $x + y + z = p$ or $x + y + z = 2p$.
    In the latter case we have $(p-x) + (p-y) + (p-z) = p$, so by symmetry of $X$ we may assume $x + y + z = p$.
    Now by the key property \eqref{eq:K-key-property} of $K$ it follows that $x, y, z \in \{\pm 1, \pm (p-1)/2\}$, as claimed.

    It follows that $|X| \le (p+5)/3$. Indeed, otherwise, $|X| \ge (p+7)/3$ and by Cauchy--Davenport $|X+X| \ge (2p+11)/3$, so
    \[
        |(-X) \cap (X+X)| \ge |X| + |X+X| - p \ge 6,
    \]
    in contradiction with \eqref{eq:X-nearly-sum-free}.
    Therefore $|K| = p - 1 - |X| \ge (2p - 8)/3$, so $\cC$ contains $|K|/2 \ge (p-4)/3$ intransitive subgroups.
    Additionally $\cC$ must contain at least one transitive subgroup (containing $p$-cycles), so $|\cC| \ge (p-1)/3$.
    By integrality it follows that $|\cC| \ge \floor{(p+1)/3}$.
\end{proof}

\section{Some approximate lower bounds}

Next we move to some approximate lower bounds. In these cases we give bounds that match the upper bounds up to a small error term.

First, even if $n$ is not prime we can give a lower bound for $\gamma(A_n)$ close to $n/3$ provided that the smallest prime factor of $n$ is large.
For this we need a result of Green and Ruzsa~\cite{green-ruzsa}*{Section~6} related to the Pollard and Kneser extensions of Cauchy--Davenport (see also \cite{hamidoune-serra}).

\begin{theorem}[Green--Ruzsa~\cite{green-ruzsa}*{Corollary~6.2}]
    Let $n \ge 1$ be a nonnegative integer, let $p_1$ be the smallest prime factor of $n$.
    For $A, B \subset \Zmod{n}$ and $K = \delta^2 n \ge 1$, let $S_K(A, B)$ denote the set of elements $x \in \Zmod{n}$ having at least $K$ representations as $a+b$ with $(a, b) \in A \times B$.
    Assume $|A|, |B| \ge \sqrt{K n}$.
    Then
    \[
        |S_K(A, B)| \ge \min(n, |A| + |B| - n/p_1) - 3 \delta n.
    \]
\end{theorem}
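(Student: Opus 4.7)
The plan is to upgrade Kneser's inequality in $\Zmod{n}$ from the ordinary sumset $A+B$ to the popular sumset $S_K(A,B)$, paying only an additive error $O(\delta n)$. The hypothesis $|A|, |B| \ge \sqrt{Kn}$ is precisely the statement that the representation function $r(x) := (1_A * 1_B)(x)$ has mean value $|A||B|/n \ge K$, so heuristically almost every sum should have at least $K$ representations, and only a small fraction can fall below.

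First, I would apply the classical Kneser inequality for $\Zmod n$: $|A+B| \ge |A|+|B|-|H|$ for some subgroup $H$ stabilizing $A+B$. Since the largest proper subgroup of $\Zmod n$ has order $n/p_1$, this already furnishes the target main term $\min(n, |A|+|B|-n/p_1)$, and the remaining task is to replace $A+B$ by $S_K(A,B)$ at a cost of $O(\delta n)$.

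Second, for this refinement I would use Fourier analysis on $\Zmod n$. Since $\widehat r(\xi) = \widehat{1_A}(\xi)\widehat{1_B}(\xi)$, fix a Fourier threshold $\tau$ (to be chosen as a small power of $\delta$ times $n^2$) and let $\Lambda = \{\xi : |\widehat{1_A}(\xi)\widehat{1_B}(\xi)| > \tau\}$. Split $r = r_\Lambda + r_{\Lambda^c}$ by partial Fourier inversion. Parseval followed by Cauchy--Schwarz yields
\[
    \sum_x r_{\Lambda^c}(x)^2 = \frac{1}{n}\sum_{\xi \notin \Lambda} \bigl|\widehat{1_A}(\xi)\widehat{1_B}(\xi)\bigr|^2 \le \frac{\tau}{n}\sum_\xi \bigl|\widehat{1_A}(\xi)\widehat{1_B}(\xi)\bigr| \le \tau \sqrt{|A||B|},
\]
and a Chebyshev bound then gives $|\{x : |r_{\Lambda^c}(x)| \ge K/2\}| = O(\tau\sqrt{|A||B|}/K^2) \le \delta n$ for $\tau$ chosen appropriately.

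Third, the structured part $r_\Lambda$ is essentially constant on cosets of the Bohr set dual to $\Lambda$, with mean value $|A||B|/n \ge K$. I would show that $\{x : r_\Lambda(x) \ge 3K/2\}$ contains a sumset $A'+B'$, where $A'$ and $B'$ are obtained from $A$ and $B$ by smoothing against the Bohr kernel and satisfy $|A'| \ge |A|-\delta n$, $|B'| \ge |B|-\delta n$. Applying Kneser to $A'+B'$ gives $|A'+B'| \ge \min(n, |A|+|B|-n/p_1) - 2\delta n$. Off the tail exceptional set we have $r \ge r_\Lambda - K/2 \ge K$ on $A'+B'$, so $S_K(A,B) \supseteq (A'+B') \setminus \textup{tail}$, yielding $|S_K(A,B)| \ge \min(n, |A|+|B|-n/p_1) - 3\delta n$.

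\textbf{Main obstacle.} The subtlest point is the interaction between the Fourier decomposition and the Kneser stabilizer: if $A+B$ is already stabilized by a large proper subgroup $H$ of order $n/p_1$, both the Fourier threshold and the Bohr smoothing must be chosen compatibly with the quotient $\Zmod{n}/H$, otherwise the tail error bleeds into the main term. This bookkeeping, rather than either of the two main analytic steps in isolation, is what ultimately fixes the constant $3$ in front of $\delta n$.
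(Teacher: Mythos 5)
This result is quoted by the paper as an external input from Green and Ruzsa (\cite{green-ruzsa}, Corollary~6.2); the paper supplies no proof of its own, so there is nothing internal to compare against. In the cited source the statement is deduced from a Pollard-type theorem valid in all finite abelian groups, which Green and Ruzsa prove by a purely combinatorial transform/induction argument in the spirit of Kneser and Davenport. Your Fourier route is therefore genuinely different from the actual proof, and it also has a real gap in its central step.

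Your second step is fine: the tail estimate $\sum_x r_{\Lambda^c}(x)^2 \le \tau\sqrt{|A||B|}$ via Parseval plus the threshold bound, followed by Chebyshev, is a correct computation. The gap is in the third step. You assert, without argument, that $\{x : r_\Lambda(x)\ge 3K/2\}$ contains a sumset $A'+B'$ with $|A'|\ge |A|-\delta n$ and $|B'|\ge |B|-\delta n$. But the mean of $r_\Lambda$ equals the mean of $r$, which the hypotheses only guarantee to be $\ge K$; there is no reason for $r_\Lambda$ to exceed $3K/2$ on a set of any appreciable size, let alone on a structured sumset of the required cardinality. Even when the averages permit it, the inclusion $A'+B'\subseteq\{r_\Lambda\ge 3K/2\}$ is a nontrivial claim about how Bohr-smoothed indicators behave under convolution, and you would have to make it precise while keeping the Bohr-set loss below $\delta n$ despite the fact that your choice of threshold $\tau$ forces $|\Lambda|$ up to order $\delta^{-5}$, and while remaining compatible with the Kneser stabilizer $H$ — the issue you flag yourself at the end. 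That last paragraph is not a side remark: it is essentially the entire content of the theorem, and without it the sketch does not establish the bound with the stated constant $3\delta n$ (or indeed any explicit constant). Green and Ruzsa avoid all of this by proving the Pollard inequality $\sum_x \min(r_{A,B}(x),t) \ge \cdots$ combinatorially and then reading off the popular-sumset bound; if you want a self-contained argument, that is the route to take.
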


\begin{corollary}
    \label{cor:green--ruzsa}
    Let $X \subset \Zmod n$ and assume there are at most $\eps n^2$ additive triples $x, y, x+y \in X$. Then
    \[
        |X| \le n/3 + O(n / p_1 + \eps^{1/3}n).
    \]
\end{corollary}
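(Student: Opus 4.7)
The plan is to derive the corollary from the Green--Ruzsa theorem by a standard averaging argument. Set $\alpha = |X|/n$; the goal is $\alpha \le 1/3 + O(1/p_1 + \eps^{1/3})$. We may assume $\alpha > 1/3$ (otherwise there is nothing to prove) and pick a parameter $\delta \in (0, \alpha)$ with $K = \delta^2 n \ge 1$, to be optimized at the end.

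First I would apply the Green--Ruzsa theorem with $A = B = X$. The hypothesis $|X| \ge \sqrt{Kn} = \delta n$ is automatic since $\delta < \alpha$. It yields
\[
    |S_K(X, X)| \ge \min(n, 2\alpha n - n/p_1) - 3\delta n.
\]
In either case the right-hand side is at least $2\alpha n - n/p_1 - 3\delta n$, so by inclusion-exclusion inside $\Zmod n$,
\[
    |X \cap S_K(X, X)| \ge |X| + |S_K(X,X)| - n \ge (3\alpha - 1)n - n/p_1 - 3\delta n.
\]

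Next I would count additive triples. Every element $z \in X \cap S_K(X, X)$ has at least $K$ representations $z = x+y$ with $x, y \in X$, and each such representation is an additive triple in $X$. Hence the number of additive triples is at least $K \cdot |X \cap S_K(X,X)|$, and by hypothesis this is at most $\eps n^2$. Dividing through by $n^2$ gives
\[
    \eps \ge \delta^2 \br{3\alpha - 1 - 1/p_1 - 3\delta}.
\]

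Finally I would optimize $\delta$. Writing $t = 3\alpha - 1 - 1/p_1$, I can take $\delta = t/6$ (assuming $t > 0$, otherwise we are already done), so that $t - 3\delta = t/2$ and the inequality above becomes $\eps \ge t^3/72$, i.e.\ $t \le (72\eps)^{1/3}$. Rearranging gives $\alpha \le 1/3 + 1/(3p_1) + O(\eps^{1/3})$, which is the claimed bound. The only mildly subtle points are checking that the choice $\delta = t/6$ satisfies $\delta < \alpha$ (automatic since $t < 3\alpha$) and that $K = \delta^2 n \ge 1$; the latter is harmless because if $\eps^{1/3} \lesssim n^{-1/2}$ then the error term $O(\eps^{1/3} n)$ already absorbs $O(1)$, so we may enlarge $\eps$ slightly without loss. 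No real obstacle here — the argument is a direct application of Green--Ruzsa combined with the obvious lower bound on additive triples contributed by high-multiplicity sums.
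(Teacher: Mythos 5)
Your approach is the same as the paper's: apply Green--Ruzsa with $A=B=X$, lower-bound $|X\cap S_K(X,X)|$ by inclusion--exclusion, and compare with the triple count. The paper fixes $\delta=\eps^{1/3}$ at the outset rather than leaving it free and optimizing at the end as you do; your optimization $\delta = t/6$ (which, incidentally, is slightly suboptimal --- $\delta=2t/9$ maximizes $\delta^2(t-3\delta)$, though the order of magnitude is the same) is a harmless cosmetic difference.

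There is, however, a small but genuine logical slip. You write ``In either case the right-hand side is at least $2\alpha n - n/p_1 - 3\delta n$'', but when the minimum in Green--Ruzsa is attained at $n$ (i.e.\ $\alpha > 1/2 + 1/(2p_1)$, so $n < 2\alpha n - n/p_1$), the theorem only provides $|S_K(X,X)| \ge n - 3\delta n$, which is \emph{weaker} than $2\alpha n - n/p_1 - 3\delta n$. So in that regime the displayed inequality does not follow from the theorem. The paper handles this by explicitly splitting into the three alternatives (the third being $|S_K(X,X)| \ge n - 3\delta n$). Fortunately the gap is harmless: in that case inclusion--exclusion gives $|X\cap S_K(X,X)| \ge (\alpha - 3\delta)n$, and the same triple-count argument yields $\alpha \lesssim \eps^{1/3}$ directly, which is stronger than needed. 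You should add this case explicitly (or, alternatively, note up front that one may assume $\alpha \le 1/2 + 1/(2p_1)$, for otherwise the stronger conclusion already follows from the $\min=n$ branch).
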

\begin{proof}
    By the theorem with $A = B = X$ and $\delta = \eps^{1/3}$, one of the following alternatives holds:
    \begin{enumerate}
        \item $|X| \le \delta n$,
        \item $|S_K(X, X)| \ge 2|X| - n / p_1 - 3 \delta n$, or
        \item $|S_K(X, X)| \ge n - 3 \delta n$.
    \end{enumerate}
    We are obviously done in case (1). Suppose (2) holds. Then
    \[
        |X \cap S_K(X, X)| \ge |X| + |S_K(X, X)| - n \ge 3|X| - n - n / p_1 - 3 \delta n.
    \]
    On the other hand $K |X \cap S_K(X, X)| \le \eps n^2$. It follows that
    \[
        3 |X| \le n + n / p_1 + 4 \eps^{1/3} n.
    \]
    Case (3) is similar.
\end{proof}

Let $\tau(n)$ denote the number of divisors of $n$.
The well-known \emph{divisor bound} asserts that
\begin{equation}
    \label{eq:divisor-bound}
    \tau(n) \le n^{o(1)}.
\end{equation}
In fact $\tau(n) \le n^{O(1/\log\log n)}$.
This follows from the fundamental theorem of arithmetic and elementary analysis.
See for example \cite{tao-blog}.

\begin{proposition}
    \label{prop:A_n-large-p1-lower}
    Suppose $n$ is an odd integer with smallest prime factor $p_1$. Then
    \[
        \gamma(A_n) \ge n/3 - O(n / p_1^{1/3} + n^{2/3 + o(1)}).
    \]
\end{proposition}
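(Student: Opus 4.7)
The plan is to adapt the argument of \Cref{prop:A_p-lower} to composite $n$, replacing Cauchy--Davenport with the Green--Ruzsa inequality of \Cref{cor:green--ruzsa}. Let $\cC$ be a minimum normal covering of $A_n$ by maximal subgroups, and let $K \subset \{1, \dots, n-1\}$ be the symmetric set of integers $k$ with $(S_k \times S_{n-k}) \cap A_n \in \cC$. Since $n$ is odd, each such subgroup accounts for a pair $\{k, n-k\} \subset K$, so $\gamma(A_n) \ge |K|/2$. Set $X = \{1, \dots, n-1\} \setminus K$, regarded as a symmetric subset of $\Zmod n$. It suffices to show $|X| \le n/3 + O(n/p_1^{1/3} + n^{2/3+o(1)})$, whence $\gamma(A_n) \ge (n-1-|X|)/2 \ge n/3 - O(n/p_1^{1/3} + n^{2/3+o(1)})$.

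For any positive integers $x, y, z$ with $x+y+z = n$ and $x, y, z \in X$, the cycle type $(x, y, z)$ gives rise to an element of $A_n$ (since $n$ is odd) that is not contained in any intransitive subgroup of $\cC$, and so must be contained in a transitive subgroup of $\cC$. By \Cref{lem:imprimitive,lem:primitive} applied with $k = 3$, such a ``bad'' triple falls into one of three categories: either $\gcd(x,y,z) > 1$; or $\gcd(x,y,z) = 1$ and some permutation of $(x,y,z)$ satisfies the imprimitive divisibility condition $(x_1+x_2)/\gcd(x_3, n) \mid x_1, x_2$ of \Cref{lem:imprimitive}(2); or else $n$ and $(x, y, z)$ match one of the primitive configurations (3)--(5) of \Cref{lem:primitive}.

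I would bound each category. The first contributes at most $\sum_{p \mid n}(n/p)^2 \le \omega(n)(n/p_1)^2 = n^{o(1)}(n/p_1)^2$ ordered triples, since for each prime $p \mid n$ the pair $(x, y)$ lies in $(p\Zmod n)^2$ with $z$ determined. The second contributes $O(n^{1+o(1)})$: for fixed $x_3$ the divisibility forces $x_1$ to be a positive multiple of $(n - x_3)/\gcd(x_3, n)$ strictly below $n - x_3$, giving at most $\gcd(x_3, n)$ choices, and $\sum_{x_3 = 1}^{n-1} \gcd(x_3, n) = \sum_{d \mid n} d\, \phi(n/d) \le n\tau(n) = n^{1+o(1)}$; the three choices of which position plays the role of $x_3$ are absorbed into the constant. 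The third contributes only $n^{o(1)}$: case (3) of \Cref{lem:primitive} requires $n$ prime and does not arise; case (4) requires $n = p^2$ and yields $O(1)$ triples; case (5) requires $n = (q^d-1)/(q-1)$ and yields $O(\log n)$. Using $X = -X$ to translate between ordered triples summing to $0 \pmod n$ and additive triples, the number of additive triples in $X$ is at most $\eps n^2$ with $\eps = O(n^{o(1)}/p_1^2 + n^{-1+o(1)})$.

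\Cref{cor:green--ruzsa} now yields $|X| \le n/3 + O(n/p_1 + \eps^{1/3} n) = n/3 + O(n^{1+o(1)}/p_1^{2/3} + n^{2/3+o(1)})$, which is at most $n/3 + O(n/p_1^{1/3} + n^{2/3+o(1)})$ after absorbing the sub-polynomial factor into $p_1^{1/3}$; when $p_1$ is too small for this absorption the claim is trivial because $n/p_1^{1/3}$ already exceeds $n/3$. The main obstacle is the case analysis using \Cref{lem:primitive} to verify that the primitive transitive exceptions are few enough to keep $\eps$ genuinely small, together with the bookkeeping of $\tau(n)$-type factors; once the reduction to additive combinatorics is in hand, Green--Ruzsa closes the argument mechanically.
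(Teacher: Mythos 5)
Your approach is essentially the paper's: the same reduction to counting additive triples in $X = \{1,\dots,n-1\}\setminus K$, the same use of \Cref{cor:green--ruzsa}, and the same three-way case analysis of transitive covers via \Cref{lem:imprimitive,lem:primitive}. However, there is a genuine gap in how you bound the first category of triples. You estimate $\sum_{p\mid n}(n/p)^2 \le \omega(n)(n/p_1)^2$, giving $\eps = O(\omega(n)/p_1^2 + n^{-1+o(1)})$ and hence $\eps^{1/3}n = O(\omega(n)^{1/3}n/p_1^{2/3} + n^{2/3+o(1)})$. Matching the stated error term $O(n/p_1^{1/3})$ then requires $\omega(n)^{1/3}/p_1^{2/3} = O(1/p_1^{1/3})$, i.e.\ $\omega(n) = O(p_1)$. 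Your fallback --- ``when $p_1$ is too small for this absorption the claim is trivial'' --- does not close the gap: the claim is only trivial when $p_1$ is bounded by a fixed constant (so that $n/p_1^{1/3}$ is comparable to $n/3$), whereas $\omega(n) \gg p_1$ can occur with $p_1$ arbitrarily large (e.g.\ $n$ a product of $\gg p_1$ primes each at least $p_1$, which forces $p_1 \lesssim \log n/\log\log n$ but allows $p_1 \to \infty$). In that regime the absorption fails and the claim is not trivial, so your argument as written does not establish the proposition.

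The fix is the sharper telescoping bound used in the paper:
\[
    \sum_{p\mid n}\pfrac{n}{p}^2 \le \sum_{k \ge p_1}\frac{n^2}{k(k-1)} = \frac{n^2}{p_1-1},
\]
which gives $\eps = O(1/p_1 + n^{-1+o(1)})$ with no $\omega(n)$ factor, and then $\eps^{1/3}n = O(n/p_1^{1/3} + n^{2/3+o(1)})$ directly.

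Two minor inaccuracies that do not affect the count: \Cref{lem:primitive}(3) requires $n$ an odd prime power (not merely prime) and does arise when $n$ is such, contributing $O(1)$ triples rather than zero; and \Cref{lem:primitive}(5) contributes $O((\log n)^2)$ rather than $O(\log n)$ triples (one factor for the pair $(q,d)$, one for $(d_1,d_2)$). Both are still $n^{o(1)}$.
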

\begin{proof}
    Let $\cC$ be a normal covering consisting of maximal subgroups of $A_n$.
    Let $K$ be the symmetric subset of $\{1, \dots, n-1\}$ such that $k \in K \iff (S_k \times S_{n-k}) \cap A_n \in \cC$.
    Let $X = \{1, \dots, n-1\} \sm K \bmod n$.
    Then $|K| = n-1-|X|$ and $\cC$ contains $|K|/2 = (n-1 - |X|)/2$ intransitive subgroups.
    By the previous corollary it therefore suffices to bound the number of additive triples $x, y, x+y$ in $X$.

    Suppose $x, y, x+y \in X$. By symmetry of $X$ may assume $x$ and $y$ are integers in the range $\{1, \dots, n-1\}$ such that $x + y < n$. Let $z = n - x - y$. Then $(x, y, z)$ is a $3$-partition not covered by $\{S_k \times S_{n-k} : k \in K\}$, so $(x, y, z)$ must be covered by a transitive subgroup. By \Cref{lem:imprimitive,lem:primitive}, one of the following alternatives holds.
    \begin{enumerate}
        \item $\gcd(x, y, z) > 1$,
        \item $(x+y) / \gcd(z, n) \mid x, y$ (or a permutation),
        \item $\min(x, y, z) = 1$,
        \item $(x, y, z)$ has the form in \Cref{lem:primitive}(5).
    \end{enumerate}
    We consider each case in turn.

    (1) The number of triples $(x, y, z)$ with $x+y+z = n$ and $\gcd(x, y, z) = \gcd(x, y, n) > 1$ is bounded by
    \[
        \sum_{p \mid n} (n / p)^2 \le \sum_{k \ge p_1} \frac{n^2}{k(k-1)} = \frac{n^2}{p_1-1}.
    \]

    (2) By symmetry it is enough to consider the case $(x + y) / \gcd(z, n) \mid x, y$. Let $d = \gcd(z, n)$ and $e = (x+y) / d$. Then $e \mid x, y$, so $x = ex'$ and $y = ey'$ for some integers $x', y'$ such that $x' + y' = d$. The triple $(x, y, z)$ is determined by the choice of divisor $d \mid n$ and positive integers $x' < d$ and $e < n/d$.
    It follows that the triples is less than
    \[
        \sum_{d \mid n} d \cdot (n/d) \le n \tau(n) \le n^{1+o(1)}.
    \]

    (3) The number of triples with $x+y+z=n$ and $\min(x,y,z)=1$ is obviously at most $3n$.

    (4) If $n = (q^d-1) / (q-1)$ then $d \le \log_2(n+1)$, and so there are at most $\log_2(n+1)$ pairs $(q, d)$ to consider. For each of these there are at most $d-1$ choices for $(d_1, d_2)$. Therefore the number of triples to consider is $O(\log n)^2$.

    In total we find that there are $O(n^2 / p_1 + n^{1+o(1)})$ additive triples in $X$, so \Cref{cor:green--ruzsa} implies
    \[
        |X| \le n/3 + O(p_1^{-1/3} + n^{-1/3 + o(1)}) n.
    \]
    This completes the proof.
\end{proof}

Next we turn to the case of $G = S_n$ with $n$ even or $G = A_n$ with $n$ odd.
We will prove a lower bound approximately matching the bound in \Cref{prop:BPS-upper-bound} in the case in which $6 \mid n$.

The following lemma solves the essential additive-combinatorial problem that we will reduce to.
Let us say $X \subset \Zmod n$ is \emph{coprime-sum-free} if there does not exist $x, y, x+y \in X$ with $\gcd(x, y, n) = 1$.

\begin{proposition}
    \label{prop:coprime-sum-free-2/3}
    If $X \subset \Zmod n$ is coprime-sum-free then $|X| \le (2/3)n$.
    In fact the same bound holds provided only that $X$ is free of additive triples of the form $x, x+1, 2x+1$ when $n$ is odd
    or of the form $x, x+h, 2x+h$ with $x$ odd and $h \in \{2, 4, 8\}$ when $n$ is even.
\end{proposition}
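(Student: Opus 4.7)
The plan is to prove the second, stronger statement, which immediately yields the first: if $X$ is coprime-sum-free then in particular $X$ is free of triples $(x, x+h, 2x+h)$ with $h \in \{1, 2, 4\}$ and $\gcd(x, h, n) = 1$, since those are coprime Schur triples (with $y = x+h$ and $\gcd(x, y, n) = \gcd(x, h, n) = 1$). So assume for contradiction that $|X| > 2n/3$ and that $X$ has no such triple for $h \in \{1, 2, 4\}$.

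For $n$ odd, only the $h = 1$ case is needed. The hypothesis says that for every $x \in \Zmod n$ the triple $\{x, x+1, 2x+1\}$ meets $Y := \Zmod n \sm X$. Summing over $x$ gives $\sum_x |\{x, x+1, 2x+1\} \cap Y| \ge n$. Since $n$ is odd, the map $x \mapsto 2x+1$ is a bijection on $\Zmod n$, so each vertex $v$ lies in exactly three triples (once as $x$, once as $x+1$, and once as $2x+1$). Hence $3|Y| \ge n$, i.e.\ $|X| \le 2n/3$.

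For $n$ even the same count degrades: each even vertex lies in only two such triples (because $x \mapsto 2x+1$ is $2$-to-$1$ onto odd residues), which only yields $|E| + 2|O| \le n$ with $E = X \cap \text{evens}$ and $O = X \cap \text{odds}$, hence at best $|X| \le 3n/4$. The plan is to augment with the $h = 2, 4$ hypotheses; the coprimality constraint $\gcd(x, h, n) = 1$ reduces for $n$ even to `$x$ odd', so these conditions constrain pairs of odd elements of $X$ via their (even) sum. Unfortunately the analogous double-counts each reproduce the same linear inequality $|E| + 2|O| \le n$, so linear programming alone is not enough.

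Instead I would do a case analysis on $|O|$. When $|O| \ge n/3$ the $A_1$-inequality forces $|X| \le n - |O| \le 2n/3$; when $|O| \le n/6$ the trivial $|E| \le n/2$ suffices. The delicate intermediate range $n/6 < |O| < n/3$ requires a genuine combinatorial argument: for each $y$ with enough $X$-neighbours of appropriate parity, combinations of $A_1, A_2, A_4$ applied at $y$ force whole consecutive blocks around $2y$ to lie in $Y$ (for instance, if $y \in E$ and $y \pm 1 \in O$, then $A_2$ at $y$ kills $2y$ while $A_1$ applied at $x = y - 1$ and $x = y$ kills $2y - 1$ and $2y + 1$). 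Arranging sufficiently many disjoint obstructions of this form should push $|Y|$ above $n/3$. The main obstacle is this intermediate case: the three $A_h$ conditions are linearly redundant, and only by exploiting integrality and the $2$-to-$1$ geometry of the doubling maps $x \mapsto 2x + h$ does one extract the missing factor of $4/3$ beyond the LP bound.
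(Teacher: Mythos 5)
Your argument for $n$ odd is correct and matches the substance of the paper's proof in that case. The genuine gap is exactly where you flag it: the intermediate range $n/6 < |O| < n/3$ when $n$ is even is left as an unfinished sketch. What you write there (``Arranging sufficiently many disjoint obstructions\ldots should push $|Y|$ above $n/3$'') is speculation, not a proof, and you correctly observe that the linear double-counts you set up cannot supply it. (Incidentally, the $h=2$ and $h=4$ double-counts do not literally reproduce $|E|+2|O|\le n$; for $4\mid n$ they give $|O|+|X\cap 4\Z|\le n/2$ and $|O|+|X\cap(4\Z+2)|\le n/2$, whose sum recovers your $h=1$ inequality. But the upshot, that the LP value is $3n/4$, is right.)

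The idea that closes the gap in the paper is a stronger \emph{local} inequality, not a cleverer double count. Rather than using only the fact that at least one of the three elements $x$, $x+h$, $2x+h$ lies outside $X$, the paper considers the six-element window $x-h,\ x,\ x+h,\ 2x-h,\ 2x,\ 2x+h$ and argues that the indicator sum
\[
f(x-h)+f(x)+f(x+h)+f(2x-h)+f(2x)+f(2x+h)\le 4,
\]
that is, at least \emph{two} of the six are missing. This uses the three overlapping Schur relations $(x-h)+x=2x-h$, $(x-h)+(x+h)=2x$, and $x+(x+h)=2x+h$, with shifts $h$, $2h$, $h$ respectively---this is precisely why the hypothesis allows $h\in\{1,2,4\}$. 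One then sums: over all $x$ with $h=1$ when $n$ is odd; over odd $x$ with $h=2$ when $n\equiv 2\pmod 4$; and over odd $x$ with both $h=1$ and $h=2$ (adding the two resulting inequalities) when $4\mid n$. In each case the left-hand side collapses to $6|X|$, giving $|X|\le 2n/3$. This ``at least two of six'' refinement is the structural step your proposal is missing, and it cannot be recovered by only summing the three-element obstructions.
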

\begin{proof}
    First suppose $n$ is odd and $X$ is free of triples of the form $(x,x+1,2x+1)$. Then the indicator function $f$ of $X$ satisfies the inequality
    \[
        f(x) + f(x+1) + f(2x+1) \le 2.
    \]
    Taking the sum over $x \in \Zmod n$ gives $3|X| \le 2n$, as claimed.
    
    Now assume $n$ is even and $X$ is free of triples of the form $(x, x+h, 2x+h)$ with $x$ odd and $h \in \{2,4,8\}$.
    Let $f$ denote the indicator function of $X$ and concede that
    \begin{equation}
        \label{eq:key-f-inequality}
        f(x-h) + f(x) + f(x+h) + f(2x-h) + f(2x) + f(2x+h) \le 4
    \end{equation}
    whenever $h \in \{2, 4\}$ and $x$ is odd.
    Indeed, if $X$ contains all three of $x-h, x, x+h$ then $X$ can contain none of $2x-h, 2x, 2x+h$ since $2x-h = (x-h) + x$, $2x = (x-h) + (x+h)$, and $2x+h = x + (x+h)$,
    while if $X$ contains two of $x-h, x, x+h$ then $X$ must be missing at least one of $2x-h, 2x, 2x + h$.

    If $n \equiv 2 \pmod 4$ then by setting $h=2$ and summing \eqref{eq:key-f-inequality} over all odd $x$ we get
    \[
        3|X| = 3\sum_{\text{odd}} f + 3\sum_{\text{even}} f \le 2n.
    \]
    Finally if $n \equiv 0 \pmod 4$ then by setting $h=2,4$ respectively and summing \eqref{eq:key-f-inequality} over all odd $x$ we get
    \begin{align*}
        &3\sum_{\text{odd}} f + 2 \sum_{2 \bmod 4} f +  4 \sum_{0 \bmod 4} f&\le 2n, \\
        &3\sum_{\text{odd}} f + 6 \sum_{2 \bmod 4} f &\le 2n.
    \end{align*}
    Taking $3$ times the first inequality plus the second gives $12 |X| \le 8n$, which again gives the claim.
\end{proof}

\begin{remark}
    \label{remark:problem-1}
    If $n$ is divisible by $6$, the example
    \[
        X = \{x \in \Zmod n : 2\mid x ~ \text{or} ~ 3 \mid x\}
    \]
    shows that the constant $2/3$ is sharp (cf.~the definition of $\cC_1$ in the proof of \Cref{prop:BPS-upper-bound}).
    For other $n$, finding the cardinality of the largest coprime-sum-free subset of $\Zmod n$ is an interesting additive-combinatorial problem that we have not been able to solve in general.
    Generalizing the above example, if $\omega(n) \ge 2$ and $p_1$ and $p_2$ are the smallest prime factors of $n$ then
    \[
        X = \{x \in \Zmod n : p_1 \mid x ~ \text{or} ~ p_2 \mid x\}
    \]
    is a coprime-sum-free set of density $1/p_1 + 1/p_2 - 1/(p_1p_2)$.
    On the other hand there is always an ordinary sum-free set of density $1/3 - o(1)$ (as $n\to\infty$), which is greater if $p_1 > 3$.
    It may be that if $\gcd(n, 6) = 1$ then the largest density of a coprime-sum-free set is about the same as that of the largest ordinary sum-free set.
\end{remark}

\begin{proposition}
    \label{prop:n/6-lower-bound}
    Assume $G = S_n$ and $n$ is even or $G = A_n$ and $n$ is odd. Then
    \[
        \gamma(G) \ge n / 6 - n^{o(1)}.
    \]
\end{proposition}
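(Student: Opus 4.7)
The plan is to replicate the strategy of \Cref{prop:A_n-large-p1-lower}, substituting \Cref{prop:coprime-sum-free-2/3} for \Cref{cor:green--ruzsa}. Let $\cC$ be a normal covering of $G$ by maximal subgroups, set $K = \{k \in \{1,\dots,n-1\} : (S_k \times S_{n-k}) \cap G \in \cC\}$ (symmetric), and let $X = \{1,\dots,n-1\} \sm K$ viewed as a symmetric subset of $\Zmod n$. Since $|\cC| \ge |K|/2 = (n - 1 - |X|)/2$, it suffices to prove $|X| \le (2/3)n + n^{o(1)}$.

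I will argue that $X$ is nearly coprime-sum-free in the sense of \Cref{prop:coprime-sum-free-2/3}. Every triple $(u, u+h, 2u+h) \subset X$ with $h \in \{1,2,4\}$ and $\gcd(u,h,n) = 1$ yields a 3-partition $(a,b,c)$ of $n$ with $\gcd(a,b,c) = 1$, all parts in $X \cap \{1,\dots,n-1\}$, and $|a - b| \le 4$. The corresponding permutation has sign $(-1)^{n-3}$ matching $G$, and since it is not covered by any intransitive subgroup in $\cC$, it must be covered by some transitive subgroup in $\cC$. By \Cref{lem:imprimitive,lem:primitive}, such a 3-partition must either satisfy the divisibility condition of \Cref{lem:imprimitive}(2) or lie in one of the primitive families listed in \Cref{lem:primitive}.

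The main obstacle will be to show that the number $V$ of forbidden triples is at most $n^{o(1)}$. The primitive cases contribute $O(\log n)$. In the imprimitive case with partition $\{\{a\}, \{b, c\}\}$, writing $B = \gcd(a, n)$, $e = (n-a)/B$, $b = eb_0$, $c = ec_0$, $b_0 + c_0 = B$, the identity
\[
a - b = (n - eB) - eb_0 = n - e(B + b_0)
\]
forces $e(B + b_0) = n - t$ for some $t$ with $|t| \le 4$, so $e$ divides $n - t$. By the divisor bound $\tau(m) \le m^{o(1)}$, the number of valid tuples $(t, e, B)$ is at most $\sum_{|t| \le 4} \tau(n-t)\tau(n) = n^{o(1)}$. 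The remaining subcases of \Cref{lem:imprimitive}(2) (the close pair lying inside the 2-cell of the partition, or the singleton relabelled as $b$ or $c$) yield analogous $O(\tau(n))$ or $n^{o(1)}$ bounds by essentially the same divisor-bound reasoning.

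Finally, the averaging argument of \Cref{prop:coprime-sum-free-2/3} with violation tracking yields $6|X| \le 4n + O(V)$: each forbidden triple causes the key inequality $f(x-h) + \dots + f(2x+h) \le 4$ to fail by a bounded amount at only $O(1)$ centres $x$. Combined with $V \le n^{o(1)}$, this gives $|X| \le (2/3)n + n^{o(1)}$ and hence $|\cC| \ge n/6 - n^{o(1)}$, as required.
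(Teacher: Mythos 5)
Your proposal follows the paper's proof essentially step for step: reduce to bounding $|X|$, observe that $X$ must be nearly coprime-sum-free via \Cref{lem:imprimitive,lem:primitive} applied to the $3$-partitions $(u, u+h', n-2u-h')$, count the violations with the divisor bound, and conclude with \Cref{prop:coprime-sum-free-2/3}. The only cosmetic differences are that you track violations directly inside the averaging inequality ($6|X| \le 4n + O(V)$) rather than removing a small symmetric exceptional set $T$ from $X$ before applying the lemma, and that you parametrize the imprimitive count via $(e,B)$ rather than factoring $n-x-h$ as a divisor of $n$ times a divisor of $n \pm h$; both routes give the same $n^{o(1)}$ bound. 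One small point worth spelling out (the paper handles it by putting $[-2,2]$ and $[n/2-2,n/2+2]$ into $T$): you need to use the symmetry of $X$ to ensure $2u+h' < n$ so that $(u,u+h',n-2u-h')$ is a genuine positive partition of $n$, and note that the primitive contribution is $O((\log n)^2)$ from \Cref{lem:primitive}(5) once the cases (3) and (4) with a part equal to $1$ are discarded; neither affects the $n^{o(1)}$ error.
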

\begin{proof}
    Let $\cC$ be a normal covering consisting of maximal subgroups of $G$.
    Let $K$ be the symmetric subset of $\{1, \dots, n-1\}$ such that $k \in K \iff (S_k \times S_{n-k}) \cap G \in \cC$
    Let $X = \{1, \dots, n-1\} \sm K \bmod n$.
    As in previous arguments we have $|\cC| \ge (n-1-|X|)/2$, so it suffices to bound $|X|$.
    We will argue that $|X|$ is bounded by roughly $(2/3)n$ using \Cref{prop:coprime-sum-free-2/3} and the observation that $X$ is almost coprime-sum-free, as before using \Cref{lem:imprimitive,lem:primitive} to make this precise.

    By \Cref{prop:coprime-sum-free-2/3}, it would be enough to remove a small symmetric set $T$ from $X$ so that $X \sm T$ is free of triples of the form $x, x+h, 2x+h$ where $h \in \{1, 2, 4\}$ and $\gcd(x, h, n) = 1$.
    Suppose $x, x+h, 2x+h \in X$ and $\gcd(x, h, n) = 1$.
    By including the intervals $[-2, 2]$ and $[n/2-2, n/2+2]$ in $T$ and using symmetry of $X$ and $T$ we may assume $2 < x, x+h < n/2-2$, so $2x+h < n$.
    Now \Cref{lem:imprimitive,lem:primitive} apply to the triple $(x, x+h, n - 2x - h)$, which must be covered by some transitive subgroup in $\cC$. Note that $\gcd(x, x+h, n-2x-h) = \gcd(x, h, n) = 1$.
    Since $1 \in T$, the cases \Cref{lem:primitive}(3, 4) are already eliminated.
    We may eliminate \Cref{lem:primitive}(5) by including in $T$ all $O(\log n)^2$ points of the form $\pm (q^{d_1}-1) / (q-1)$ with $n = (q^d-1)/(q-1)$.
    Now consider \Cref{lem:imprimitive}(2). There are three conditions to consider:
    \begin{enumerate}
        \item $(2x+h) / \gcd(2x+h, n) \mid x, x+h$,
        \item $(n-x-h) / \gcd(n-x-h, n) \mid x, n-2x-h$,
        \item $(n-x) / \gcd(n-x, n) \mid x+h, n-2x-h$.
    \end{enumerate}
    We consider each case in turn for fixed $h \in \{1, 2, 4\}$.

    (1) We have $(2x+h) / \gcd(2x+h, n) \mid h$, so $2x+h = de$ where $d \mid n$ and $e \mid h$.
    The number of solutions is therefore bounded by $2\tau(n) \tau(h)$.

    (2) In this case $(n - x - h) / \gcd(n - x- h, n) \mid n - h$, so $n - x - h = de$ where $d \mid n$ and $e \mid n - h$. The number of solutions is therefore bounded by $\tau(n) \tau(n-h)$.

    (3) Similarly, $(n - x) / \gcd(n - x, n) \mid n + h$, so $n - x = de$ for some $d \mid n$ and $e \mid n+h$.
    The number of solutions is therefore bounded by $\tau(n) \tau(n+h)$.

    Applying the divisor bound \eqref{eq:divisor-bound} and summing over $h = 1, 2, 4$, it follows that the total number of triples is only $n^{o(1)}$.
    Pick a point $x$ from each such triple and include $\pm x$ in $T$.
    Then we have obtained the required set $T$, and $|T| \le n^{o(1)}$, so \Cref{prop:coprime-sum-free-2/3} implies $|X \sm T| \le (2/3) n$,
    so $|X| \le (2/3)n + n^{o(1)}$.
\end{proof}

Finally we give a lower bound in the case $G = S_n$ with $n$ odd or $G = A_n$ with $n$ even that is approximately within a factor of $2$ of the main term in the upper bound \Cref{prop:new-upper-2}.

Again we start by isolating the essential additive-combinatorial problem.
For elements $x, y, z$ in an abelian group define
\[
    \cube(x, y, z) = \{x, y, z, x+y, x+z, y+z, x+y+z\}.
\]
Call $X \subset \{1, \dots, n-1\}$ \emph{coprime-cube-free} if there does not exist $x, y, z$ with $\gcd(x, y, z, n) = 1$ and $\cube(x, y, z) \subset X$.
The following proposition gives a bound for the size of any symmetric coprime-cube-free subset of $\{1, \dots, n-1\}$.

\begin{proposition}
    \label{prop:coprime-cube-free}
    If $X \subset \{1, \dots, n-1\}$ is symmetric and coprime-cube-free then
    \[
        |X| \le \tfrac{5}{6} n + O(1).
    \]
    In fact the same bound holds provided only that $X$ is free of degenerate cubes of the form
    \[
        \cube(x, x, x+h) = \{x, x+h, 2x, 2x+h, 3x+h\} \qquad (h = \pm 1).
    \]
\end{proposition}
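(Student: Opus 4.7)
The plan is to follow the averaging scheme of \Cref{prop:coprime-sum-free-2/3}, converting the degenerate-cube-free hypothesis into a pointwise inequality on $f = \mathbf{1}_X$ (extended by $f(0) = 0$) and summing over $\Zmod n$. The hypothesis is equivalent to
\[
    f(x) + f(x+h) + f(2x) + f(2x+h) + f(3x+h) \le 4 \qquad (x \in \Zmod n,\ h \in \{\pm1\}),
\]
since saturation of the left-hand side would force $\cube(x, x, x+h) \subset X$.

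The key bookkeeping observation is that $\sum_{x \in \Zmod n}[f(x) + f(x+h) + f(2x) + f(2x+h)] = 4|X|$ irrespective of $\gcd(n, 2)$. The first two terms obviously contribute $2|X|$, and $\sum_x [f(2x) + f(2x+h)] = 2|X|$ as well, whether $n$ is odd (in which case multiplication by $2$ is a bijection and each summand is $|X|$) or $n$ is even (in which case $2x$ and $2x+h$ each cover a parity class of $\Zmod n$ with multiplicity $2$). Meanwhile $\sum_x f(3x+h) = \gcd(n, 3) \cdot \#\{y \in X : y \equiv h \pmod{\gcd(n, 3)}\}$. Thus if $\gcd(n, 3) = 1$ we obtain $5|X| \le 4n$ at once, giving $|X| \le 4n/5$.

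When $3 \mid n$, set $B_i = \#\{y \in X : y \equiv i \pmod 3\}$, so $\sum_x f(3x+h) = 3 B_{h \bmod 3}$. Adding the $h = 1$ and $h = -1$ inequalities yields
\[
    8|X| + 3(B_1 + B_2) \le 8n,
\]
and substituting $B_1 + B_2 = |X| - B_0$ together with the trivial bound $B_0 \le n/3$ gives $11|X| \le 9n$, i.e.\ $|X| \le 9n/11$. Since both $4/5$ and $9/11$ are less than $5/6$, the bound $|X| \le (5/6)n$ follows cleanly in every case; the additive $O(1)$ stated in the proposition is only a safety margin.

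The only subtle part of the argument is the bookkeeping for the first four summands, which collapses to $4|X|$ uniformly in the parity of $n$; everything else is routine elementary counting. The $h = -1$ inequality could in fact be derived from the $h = +1$ inequality via the assumed symmetry $X = -X$, so the hypothesis could equivalently be stated one-sidedly, but stating both is cleaner for the averaging step.
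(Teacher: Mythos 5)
Your opening claim---that the degenerate-cube-free hypothesis ``is equivalent to'' the pointwise inequality
\[
    f(x) + f(x+h) + f(2x) + f(2x+h) + f(3x+h) \le 4 \qquad (x \in \Zmod n,\ h \in \{\pm1\})
\]
holding for \emph{all} $x\in\Zmod n$---is false, and this is where the proof breaks. The hypothesis concerns \emph{integer} cubes: it rules out $\{x, x+h, 2x, 2x+h, 3x+h\}\subset X$ only when all five of these integers lie in $\{1,\dots,n-1\}$, i.e.\ (for $h=\pm1$) roughly when $x\le n/3$. For $x$ in the middle third of the range, the integers $2x$, $2x+h$, $3x+h$ exceed $n-1$ and wrap around modulo $n$, producing a ``wraparound cube'' of residues; the hypothesis says nothing about whether those residues can all lie in $X$. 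Symmetry of $X$ (equivalently, evenness of $f$) recovers the inequality for $x$ in the top third via $(x,h)\mapsto(-x,-h)$, but it does not and cannot give you the inequality for $x$ in the middle third $(n/3,2n/3)$: the image of such an $x$ under $x\mapsto n-x$ is again in the middle third. So summing the pointwise bound over all of $\Zmod n$, as you do, uses it on $\Theta(n)$ values of $x$ where it is unjustified.

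A tell-tale sign of the gap is that your argument yields the stronger conclusions $|X|\le 4n/5$ (when $3\nmid n$) and $|X|\le 9n/11$ (when $3\mid n$), both sharper than the stated $5n/6$. The loss down to $5/6$ in the actual proof is exactly the price of the uncontrolled middle third. The correct version of your averaging is to sum only over integers $x\in[0,n/3]$ (where no wraparound occurs), and then use the symmetry $f(y)=f(n-y)$ to rewrite the resulting sums $\sum_{[0,n/3]}f$ and $\sum_{[0,2n/3]}f$ in terms of $\sum_{[0,n]}f=|X|$; the leftover terms $\sum_{[n/3,2n/3]}f$ and $\sum_{[0,n]\cap 3\Z}f$, each bounded trivially by $n/3+O(1)$, are what produce the constant $5/6$. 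So the underlying idea (average the pointwise bound, exploit symmetry) is the right one, but the domain of summation and the subsequent bookkeeping both need to change.
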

\begin{proof}
    Let $f$ denote the indicator function of $X$. Then for all $x$ and $h = \pm1$ we have identically
    \begin{equation}
        \label{eq:f4}
        f(x) + f(x+h) + f(2x) + f(2x+h) + f(3x+h) \le 4.
    \end{equation}
    Consider the sum over all $x \in [0, n/3]$ and $h = \pm 1$.
    With $O(1)$ error (the edge effect of values of $x$ near the ends of the interval),
    the linear forms $x, x+h$ run each twice over the interval $[0, n/3]$,
    the linear form $2x$ runs twice over the even integers in the interval $[0, 2n/3]$,
    the linear from $2x+h$ runs twice over the odd integers in the interval $[0, 2n/3]$, and lastly
    the linear form $3x+h$ runs once over the ingers in the interval $[0,n]$ not divisible by $3$.
    Therefore we obtain
    \[
        4\sum_{[0, n/3]} f + 2\sum_{[0, 2n/3]} f + \sum_{[0,n] \sm 3\Z} f \le 8n/3 + O(1),
    \]
    where the error term takes care of the edge effects. By symmetry of $f$ we have
    \[
        4\sum_{[0,n/3]} f = 2\sum_{[0,n/3] \cup [2n/3,n]} f = 2\sum_{[0,n]} f - 2\sum_{[n/3, 2n/3]} f + O(1)
    \]
    and
    \[
        2\sum_{[0,2n/3]} = \sum_{[0,n]} f + \sum_{[n/3,2n/3]} f + O(1),
    \]
    so it follows that
    \[
        4 \sum_{[0,n]} f - \sum_{[n/3,2n/3]} f - \sum_{[0,n] \cap 3\Z} f \le 8n/3 + O(1).
    \]
    Since the sums of $f$ over $[n/3, 2n/3]$ and $[0,n]\cap 3\Z$ are at most $n/3 + 1$, it follows that
    \[
        4|X| = 4 \sum_{[0,n]} f \le 10n/3 + O(1),
    \]
    so $|X| \le 5n/6 + O(1)$, as claimed.
\end{proof}

\begin{remark}
    \label{remark:problem-2}
    The constant $5/6$ is almost certainly not sharp.
    We believe that $2/3$ is the optimal constant, which is attained by the example
    \[
        X = \{x \in \{1, \dots, n-1\} : 3 \nmid x\}
    \]
    whenever $n$ is divisible by $3$ (cf.~the definition of $\cC_1$ in \Cref{prop:new-upper-2}),
    and a stronger bound probably holds if $3 \nmid n$.

    There is a conjecture attributed to the first author and Pohoata that the largest (ordinary, not necessarily symmetric) cube-free subset of $\{1, \dots, n-1\}$ has density at most $2/3 + o(1)$, which is attained by both the above example and $[n/3, n]$ (see Meng~\cite{meng}).
    We can prove this in the ordinary symmetric case.
\end{remark}

The following proposition is not as natural, but it turns out to be needed in what follows.

\begin{proposition}
    \label{prop:coprime-cube-free-2}
    Assume $n$ is even. If $X \subset \{1, \dots, n-1\}$ is symmetric and free of cubes of the form $\cube(x, x, x+h)$ with $h = \pm1$ and $x$ even then
    \[
        |X| \le \tfrac{8}{9}n + O(1).
    \]
\end{proposition}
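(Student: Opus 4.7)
The plan is to mirror the proof of \Cref{prop:coprime-cube-free}, but with the pointwise inequality summed only over even $x$.  By hypothesis, for every even $x$ and $h = \pm 1$ the indicator $f$ of $X$ satisfies $f(x) + f(x+h) + f(2x) + f(2x+h) + f(3x+h) \le 4$.  Summing this over all even $x \in [0, n/3]$ and $h \in \{\pm 1\}$ gives $n/3 + O(1)$ constraints with total right-hand side at most $4n/3 + O(1)$.

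The crucial point in the counting step is that, because $x$ is constrained to be even, the term $f(2x)$ now only hits multiples of $4$ and the term $f(3x+h)$ only hits integers $\equiv \pm 1 \pmod 6$.  Carrying out the count and then applying the symmetry $f(y) = f(n-y)$ to the range-restricted sums (absorbing the resulting middle-third error terms, using the trivial bound that the number of odd integers and the number of integers $\equiv 2 \pmod 4$ in $(n/3, 2n/3)$ are each at most $n/6 + O(1)$) reduces the inequality to the residue-class form
\[
    2 E_4 + E_{4+2} + \tfrac{5}{2}(O_1 + O_5) + \tfrac{3}{2} O_3 \le \tfrac{3n}{2} + O(1),
\]
where $E_4 = \sum_{4 \mid y} f(y)$, $E_{4+2} = \sum_{y \equiv 2 \pmod 4} f(y)$, and $O_j = \sum_{y \equiv j \pmod 6} f(y)$, so that $|X| = E_4 + E_{4+2} + O_1 + O_3 + O_5$.

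To extract a bound on $|X|$, I combine the above with the trivial upper bounds $E_4, E_{4+2} \le n/4 + O(1)$ and $O_j \le n/6 + O(1)$ via an explicit dual certificate: take $\tfrac{2}{5}$ of the main inequality, plus $\tfrac{1}{5}(E_4 \le n/4) + \tfrac{3}{5}(E_{4+2} \le n/4) + \tfrac{2}{5}(O_3 \le n/6)$.  A direct check shows that the combined coefficient of each of $E_4, E_{4+2}, O_1, O_3, O_5$ is exactly $1$, so the combined left-hand side is just $|X|$; the combined right-hand side evaluates to
\[
    \tfrac{2}{5} \cdot \tfrac{3n}{2} + \tfrac{1}{5} \cdot \tfrac{n}{4} + \tfrac{3}{5} \cdot \tfrac{n}{4} + \tfrac{2}{5} \cdot \tfrac{n}{6} + O(1) = \tfrac{13n}{15} + O(1).
\]
Since $\tfrac{13}{15} < \tfrac{8}{9}$, this yields $|X| \le \tfrac{8n}{9} + O(1)$.

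The main delicate step is the bookkeeping of multiplicities in the summation and symmetrization, and in particular verifying the exact form of the residue-class inequality for the various residues of $n$ modulo $12$; the required adjustments are cosmetic and can be absorbed into the $O(1)$ error term.
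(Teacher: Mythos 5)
Your overall strategy mirrors the paper's: sum the pointwise inequality $f(x) + f(x+h) + f(2x) + f(2x+h) + f(3x+h) \le 4$ over even $x \in [0, n/3]$ and $h = \pm 1$, then exploit the symmetry $f(y) = f(n-y)$. The difference is in the bookkeeping. The paper adds three trivial inequalities first (for the missing residue classes $4\Z+2$, $2\Z+1$, and the complement of $6\Z\pm1$) to promote each term to a sum over \emph{all} residues restricted to an interval, and only then symmetrizes; this keeps the argument uniform in $n$ and gives $3|X| \le 8n/3 + O(1)$. You instead symmetrize while still tracking residues mod 4 and mod 6, and then run a linear program with the trivial per-residue-class caps.

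There is a genuine gap in your version. The residue-class inequality you write down,
\[
    2 E_4 + E_{4+2} + \tfrac{5}{2}(O_1 + O_5) + \tfrac{3}{2} O_3 \le \tfrac{3n}{2} + O(1),
\]
is what the symmetrization produces only when $n \equiv 0 \pmod{12}$. The reason is that the map $y \mapsto n-y$ preserves the residue classes $4\Z$ and $6\Z\pm1$ only when $4 \mid n$ and $6 \mid n$ respectively. If $n \equiv 2 \pmod 4$ then $y \mapsto n-y$ exchanges $4\Z$ with $4\Z+2$, so the term $2\sum_{[0,2n/3]\cap 4\Z} f$ does not symmetrize to $E_4 + M_4$; carrying out the symmetrization in that case produces a different inequality (with coefficient $2$ on both $E_4$ and $E_{4+2}$, and right-hand side $7n/4$ rather than $3n/2$), together with the forced identities $E_4 = E_{4+2}$ and (depending on $n$ mod $6$) identifications among the $O_j$. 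These are not changes of size $O(1)$: both the left-hand side and the right-hand side differ by $\Theta(n)$, so the ``adjustments are cosmetic and can be absorbed into the $O(1)$ error term'' claim is false as stated. Your explicit dual certificate is tailored to the $n\equiv 0 \pmod{12}$ LP and does not certify the other LPs. (You may well be right that each residue of $n$ mod $12$ gives an LP with optimum $13n/15$ --- I checked $n \equiv 2 \pmod{12}$ and it does, after using $E_4 = E_{4+2}$ and $O_3 = O_5$ --- but this needs to be verified separately for each residue, with its own dual certificate.) If you do not want to do that case analysis, the cleanest fix is the paper's: pad with the three trivial inequalities \emph{before} symmetrizing, so that no residue-class restrictions survive and a single uniform computation gives $|X| \le 8n/9 + O(1)$.

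One further caution: since $13/15 < 8/9$, your argument, if completed, would prove a strictly stronger bound than the one stated. That is not implausible given the paper's remark that $5/6$ in the companion proposition is ``almost certainly not sharp,'' but it is a claim that warrants the full residue-by-residue verification rather than a wave at ``cosmetic'' corrections.
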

\begin{proof}
    The proof is similar. Summing \eqref{eq:f4} over all even $x \in [0, n/3]$ and $h = \pm1$ gives
    \[
        2 \sum_{[0,n/3]} f + 2 \sum_{[0,2n/3] \cap 4\Z} f + \sum_{[0,2n/3] \cap (2\Z+1)} f + \sum_{[0,n] \cap (6\Z \pm 1)} f \le 4n/3 + O(1).
    \]
    Adding the three trivial inequalities
    \begin{align*}
    2\sum_{[0,2n/3] \cap (4\Z+2)} f \le n/3, &&
    \sum_{[0,2n/3] \cap (2\Z+1)} f \le n/3, &&
    \sum_{[0,n] \sm (6\Z\pm 1)} f \le 2n/3,
    \end{align*}
    we obtain
    \[
        2 \sum_{[0,n/3]} f + 2 \sum_{[0,2n/3]} f + \sum_{[0,n]} f \le 8n/3 + O(1).
    \]
    By symmetry of $f$, the left-hand side is equal to $3 \sum_{[0,n]} f = 3 |X|$.
    The claimed bound follows.
\end{proof}

\begin{proposition}
    \label{prop:n/12-lower-bound}
    Let $G = S_n$ with $n$ odd or $G = A_n$ with $n$ even. Then
    \[
        \gamma(G) \ge \begin{cases}
            n / 12 - n^{o(1)} &\text{if}~ G = S_n,~n~\text{odd},\\
            n / 18 - n^{o(1)} &\text{if}~ G = A_n,~n~\text{even}.
        \end{cases}
    \]
\end{proposition}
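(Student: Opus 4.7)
The plan is to follow the template of \Cref{prop:n/6-lower-bound}, reducing the problem to a statement about degenerate cubes in a symmetric subset of $\Zmod n$ and applying \Cref{prop:coprime-cube-free} (for $G = S_n$, $n$ odd) or \Cref{prop:coprime-cube-free-2} (for $G = A_n$, $n$ even). Let $\cC$ be a normal covering of $G$ by maximal subgroups, let $K \subset \{1, \dots, n-1\}$ be the symmetric index set of the intransitive subgroups $(S_k \times S_{n-k}) \cap G$ lying in $\cC$, and set $X = \{1, \dots, n-1\} \sm K \bmod n$. As in previous arguments $|\cC| \ge (n - 1 - |X|) / 2$, so it suffices to bound $|X|$ by $(5/6)n + n^{o(1)}$ in the first case and by $(8/9)n + n^{o(1)}$ in the second.

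The key reduction is that if the degenerate cube $\cube(x, x, x+h) = \{x, x+h, 2x, 2x+h, 3x+h\}$ is contained in $X$ for some $h \in \{\pm 1\}$ and some $x$ bounded away from $0$ and $n/2$, then the $4$-partition $\pi = (x, x, x+h, n-3x-h)$ has $\gcd(x, x+h) = \gcd(x, h) = 1$ and none of its subset sums lies in $K$, so $\pi$ is not covered by any intransitive subgroup of $\cC$ and must be caught by a transitive one. By \Cref{lem:imprimitive,lem:primitive} this forces $(x, h, n)$ into a short list of cases: either one of the three imprimitive conditions of \Cref{lem:imprimitive}(2), or one of the primitive exceptions \Cref{lem:primitive}(6)--(10). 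I plan to show that, taken together, these account for only $n^{o(1)}$ values of $x$: each imprimitive condition pins $x$ down by a divisibility relation (for instance, the pairing $\{1,3\},\{2,4\}$ in case (c) forces $2x+h$ to divide $n$) and contributes at most $\tau(n)\tau(n\pm h) = n^{o(1)}$ values via the divisor bound, while each primitive exception contributes only $O(\log n)^2$ values. Collecting these into a symmetric exceptional set $T$ together with the short intervals near $0$ and $n/2$, \Cref{prop:coprime-cube-free} (resp.~\Cref{prop:coprime-cube-free-2}) applied to $X \sm T$ then gives the required bound on $|X|$.

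The main obstacle is a parity asymmetry that forces the two cases to behave differently. For $G = A_n$ with $n$ even and $x$ odd, a direct calculation shows that \Cref{lem:imprimitive}(2) with partition $\Pi = \{\{1,2\},\{3\},\{4\}\}$ is \emph{automatically} satisfied with $b = \gcd(2x, x+h, n-3x-h) = 2$, so $\pi$ is always contained in a conjugate of $S_2 \wr S_{n/2}$. Hence we cannot exclude the cubes $\cube(x,x,x+h)$ with $x$ odd from $X$ cheaply, and we must fall back on \Cref{prop:coprime-cube-free-2}, which only assumes freeness of cubes with $x$ even but yields the weaker bound $8n/9$; this is precisely why the lower bound drops from $n/12$ to $n/18$ in this case. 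By contrast, for even $x$ (when $n$ is even) or any $x$ (when $n$ is odd), the same calculation gives $b = 1$ and the would-be divisibility $2x \mid x$ fails, so this pairing is ruled out; the remaining pairings and primitive exceptions then yield only $n^{o(1)}$ exceptional $x$, completing the plan.
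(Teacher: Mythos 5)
Your proposal follows essentially the same route as the paper: reduce to the degenerate-cube problem, apply \Cref{prop:coprime-cube-free} when $n$ is odd and \Cref{prop:coprime-cube-free-2} when $n$ is even, and you correctly identify the key parity obstruction. Your calculation that for $n$ even and $x$ odd the partition $\Pi = \{\{1,2\},\{3\},\{4\}\}$ automatically succeeds with $b = \gcd(2x, x+h, n-3x-h) = 2$ is exactly the reason the paper restricts to cubes satisfying $\gcd(n, x+1, 2) = 1$ and falls back on the weaker $8n/9$ bound, and this matches the paper's own commentary precisely.

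There is, however, one genuine gap in the plan. You assert that ``each primitive exception contributes only $O(\log n)^2$ values,'' which is true for \Cref{lem:primitive}(6)--(8) (they all have a part equal to $1$, eliminated by placing $1 \in T$) and for (10) (where the number of pairs $(q,d)$ is $O(\log n)^2$), but it is not a priori true for \Cref{lem:primitive}(9). There $n = m^2$ and $\tau = (k_1k_2, k_1(m-k_2), (m-k_1)k_2, (m-k_1)(m-k_2))$ with two free parameters $k_1, k_2 \le m$, so without further argument this could contribute as many as $\Theta(n)$ choices of $x$ when $n$ is a perfect square. You need to observe that the equality of the two parts equal to $x$ forces $k_1 + k_2 = m$, and then that $\gcd(x, x+h) = 1$ forces $k_1 = 1$, after which the resulting size comparison $(m-1)(x+h) > x$ rules the case out entirely for $n > 36$. (Alternatively one can appeal to the Erd\H{o}s multiplication-table bound, at the cost of an error term of the form $n/(\log n)^c$ rather than $n^{o(1)}$.) Also, a small point: the set $X$ should be taken as a subset of the integers $\{1, \dots, n-1\}$ rather than of $\Zmod n$, since \Cref{prop:coprime-cube-free,prop:coprime-cube-free-2} are statements about integer subsets, and the ``bounded away from $n/2$'' restriction you mention is automatic once $\cube(x,x,x+h) \subset \{1, \dots, n-1\}$ (the largest element $3x+h$ already forces $x \lesssim n/3$).
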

\begin{proof}
    Let $\cC$ be a normal covering consisting of maximal subgroups of $G$ and as usual let $K \subset \{1, \dots, n-1\}$ be the set of integers $k$ such that $\cC$ contains a conjugate of $(S_k \times S_{n-k}) \cap G$. Let $X = \{1, \dots, n-1\} \sm K$.
    We claim there is a small set $T \subset \{1, \dots, n-1\}$ so that $X \sm (T \cup (n - T))$ is free of cubes of the form $\cube(x,x,x+h)$ with $h = \pm1$ and $\gcd(n, x+1, 2) = 1$.

    Suppose $\cube(x, x, x+h) \subset X$. Then the partition $(x, x, x+h, n - 3x-h)$ is a coprime $4$-partition of $n$ not covered by $\{(S_k \times S_{n-k}) \cap G : k \in K\}$.
    Since permutations with $4$ cycles are contained in $A_n$ when $n$ is even and $S_n \sm A_n$ when $n$ is odd, $(x, x, x+h, n-3x-h)$ must be covered by a transitive subgroup in $\cC$ smaller than $A_n$.

    First consider imprimitive subgroups. Applying \Cref{lem:imprimitive}, one of the following cases holds for some permutation $x_1, x_2, x_3, x_4$ of the partition $x, x, x+h, n-3x-h$.
    \begin{enumerate}[(a)]
        \item ($\Pi$ has type $(3, 1)$) We have $b = \gcd(x_4, n) > 1$ and
        \[
            (x_1 + x_2 + x_3) / b \mid x_1, x_2, x_3.
        \]
        \begin{enumerate}[1.]
            \item ($x_4 = n-3x-h$) $(3x-h) / \gcd(3x-h, n) \mid x, x+h$. Since $\gcd(x, x+h) = 1$, in this case we must have $3x - h \mid n$. The number of possibilities is at most $\tau(n)$.
            \item ($x_4 = x+h$) $(n - x - h) / \gcd(x + h, n) \mid x, n - 3x - h$. In this case $n - x - h = bc$ for some $b \mid n$ and some $c \mid n - h$. The number of possibilities is at most $\tau(n) \tau(n-h)$.
            \item ($x_4 = x$) $(n - x) / \gcd(x, n) \mid x+h, n - 3x - h$. In this case $n - x = bc$ for some $b \mid n$ and some $c \mid n + 2h$. The number of possibilities is at most $\tau(n) \tau(n + 2h)$.
        \end{enumerate}
        \item ($\Pi$ has type $(2, 1, 1)$) We have $b = \gcd(x_3, x_4, n) > 1$ and
        \[
            (x_1 + x_2) / b \mid x_1, x_2.
        \]
        Note that the hypothesis $\gcd(n, x+1, 2) = 1$ eliminates the important case $x_1 = x_2 = x$. In fact $\gcd(x_3, x_4, n) > 1$ is only possible if $x_3 = x_4 = x$. In this case we must have $(n - 2x) / \gcd(x, n) \mid x+h, n - 3x - h$, which implies that $n - 2x = bc$ for some $b \mid n$ and some $c \mid n + 2h$, so the number of possibilities is at most $\tau(n) \tau(n + 2h)$.
        \item  ($\Pi$ has type $(2, 2)$) We have $b = \gcd(x_1 + x_2, n) = \gcd(x_3 + x_4, n) > 1$ and
        \[
            (x_1 + x_2) / b \mid x_1, x_2 \qquad\text{and}\qquad (x_3 + x_4) / b \mid x_3, x_4.
        \]
        There are essentially just two cases, depending on whether the two copies of $x$ appear in the same part.
        \begin{enumerate}[1.]
            \item ($x_1 = x_2 = x$) $2x / \gcd(2x, n) \mid x$ and $(n-2x) / \gcd(2x, n) \mid x+h, n-3x-h$. In this case ($n$ is even and) $n - 2x = bc$ for some $b \mid n$ and some $c \mid n + 2h$. The number of possibilities is at most $\tau(n) \tau(n + 2h)$.
            \item ($x_1 = x, x_2 = x+h$) $(2x + h) / \gcd(2x + h, n) \mid x, x+h$ and $(n-2x-h) / \gcd(2x + h, n) \mid x, n - 3x-h$. In this case $2x+h \mid n$, so the number of possibilities is at most $\tau(n)$.
        \end{enumerate}
    \end{enumerate}
    All together the number of cases is at most
    \[
        \sum_{h = \pm1} (2\tau(n) + \tau(n) \tau(n-h) + 3\tau(n)\tau(n+2h)) \le n^{o(1)}
    \]
    by the divisor bound \eqref{eq:divisor-bound}.
    For each such cube $\cube(x, x, x+h)$, include $x$ in $T$.

    Next consider primitive subgroups.
    By including $1$ in $T$ we eliminate the cases \Cref{lem:primitive}(6--8),
    but the other two cases require further argument.

    Consider \Cref{lem:primitive}(9). If $(x, x, x+h, n-3x-h)$ has the form $\tau = (k_1k_2, k_1(m-k_2), (m-k_1)k_2, (m-k_1)(m-k_2))$ we may assume (possibly swapping $k_1$ and $k_2$, or $k_1$ and $m-k_1$, or $k_2$ and $m-k_2$)
    that $k_1k_2 = x$.
    If $k_1(m-k_2) = x$ then we must have $k_2 = m/2$, in contradiction with $\gcd(k_2, m-k_2) = 1$, and similarly if $(m - k_1)k_2 = x$, so we must have $(m-k_1)(m-k_2) = x$ and say $k_1(m-k_2) = x + h$.
    Since $\gcd(x, x+h) = 1$ this implies that $k_1 = 1$ and $k_2 = x = m - x - h$, but then $(m-k_1)(m-k_2) = (m-1)(x+h) > x$.
    Therefore this case does not arise.
    (Alternatively, by the solution to Erd\H{o}s's multiplication table problem, the set of all products $k_1k_2$ with $k_1, k_2 \le m$ has cardinality $O(m^2 / (\log m)^c) = O(n / (\log n)^c)$ for some $c > 0$, so we may just include all such elements in $T$ at the cost of a somewhat poorer error bound.)

    Finally consider \Cref{lem:primitive}(10).
    Suppose the quadruple $(x, x, x+h, n-3x-h)$ is equal to
    \[
        \tau = \br{\frac{q^{d_1} - 1}{q-1}, \frac{q^{d_2} - 1}{q - 1}, \frac{(q^{d_1} - 1)(q^{d_2} - 1)}{2(q-1)}, \frac{(q^{d_1} - 1)(q^{d_2} - 1)}{2(q-1)}}
    \]
    up to permutation of the entries.
    As in the proof of \Cref{prop:n/6-lower-bound}, we can eliminate this case by including in $T$ all $O(\log n)^2$ elements of the form $(q^{d_1} - 1) / (q-1)$ with $d_1 \le d$ and $n = (q^d-1) / (q-1)$.
    (Alternatively, one may check directly that this case does not arise for $n \ge 6$.)

    Thus we have a set $T \subset \{1, \dots, n-1\}$ of size $|T| \le n^{o(1)}$ such that $Y = X \sm (T \cup (n-T))$ is free of cubes of the $\cube(x, x, x+h)$ with $h = \pm1$ and $\gcd(n, x+1, 2) = 1$.
    By \Cref{prop:coprime-cube-free,prop:coprime-cube-free-2}, we have
    \[
        |Y| \le \begin{cases}
            (5/6) n + O(1) &\text{if $n$ is odd}, \\
            (8/9) n + O(1) &\text{if $n$ is even},
        \end{cases}
    \]
    and obviously $|X| \le |Y| + 2|T| \le |Y| + n^{o(1)}$.
    Thus $|K| = n-1 - |X| \ge n - |Y| - n^{o(1)}$, and $\cC$ contains at least $|K|/2$ different intransitive subgroups.
    This completes the proof.
\end{proof}

\section{Limits}

\begin{theorem}[\Cref{thm:limits} restated]
    \label{thm:limits-restated}
    We have the following asymptotic estimates for $\gamma(S_n)$ and $\gamma(A_n)$:
    \begin{align*}
        &\limsup_{n~\textup{even}} \gamma(S_n) / n = 1/4,
        &&\limsup_{n~\textup{even}} \gamma(A_n) / n = 1/4, \\
        &\limsup_{n~\textup{odd}} \gamma(S_n) / n = 1/2,
        &&\limsup_{n~\textup{odd}} \gamma(A_n) / n = 1/3, \\
        &\liminf_{n~\textup{even}} \gamma(S_n) / n = 1/6,
        &&\liminf_{n~\textup{even}} \gamma(A_n) / n \in [1/18, 1/6], \\
        &\liminf_{n~\textup{odd}} \gamma(S_n) / n \in [1/12, 1/6],
        &&\liminf_{n~\textup{odd}} \gamma(A_n) / n \in [1/6, 4/15]. \\
    \end{align*}
\end{theorem}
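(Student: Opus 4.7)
The plan is to match, for each of the eight quantities, a uniform bound established in Section~2 against a realizing sequence from Sections~4 and~5 or an auxiliary sequence chosen to apply \Cref{prop:BPS-upper-bound} or \Cref{prop:new-upper-2}. No new arguments are needed; the proof is an assembly of results already established, with a small amount of bookkeeping to check hypotheses.

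For the four limsups, the uniform upper bounds are the three cases of \Cref{prop:basic-upper-bounds}, yielding $\gamma(S_n)/n \le 1/2 + o(1)$ for odd $n$, $\gamma(A_n)/n \le 1/3 + o(1)$ for odd $n$, and $\gamma(S_n)/n, \gamma(A_n)/n \le 1/4 + o(1)$ for even $n$. These are matched along the following sequences: primes $p$ witness $\gamma(S_p)/p \to 1/2$ via \Cref{remark:known-exact-things}; the primes $p > 36$ not of the form $(q^d-1)/(q-1)$, which by the remark after \Cref{prop:A_p-lower} form a density-one subset of primes, witness $\gamma(A_p)/p \to 1/3$ via \Cref{prop:A_p-lower}; and the sequence $n = 2^k$ witnesses $\gamma(S_n)/n \to 1/4$ via \Cref{prop:S_2p-lower} and $\gamma(A_n)/n \to 1/4$ via \Cref{remark:known-exact-things} (for $k \ne 3$).

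For the four liminfs, the uniform lower bounds come from \Cref{prop:n/6-lower-bound} (giving $1/6$ in the cases $G = S_n$ with $n$ even and $G = A_n$ with $n$ odd) and \Cref{prop:n/12-lower-bound} (giving $1/12$ for $G = S_n$ with $n$ odd and $1/18$ for $G = A_n$ with $n$ even). The matching upper bounds on the liminfs are obtained along subsequences with many small prime factors. For $G = S_n$ with $n$ even, \Cref{prop:BPS-upper-bound} applied to any sequence with $6 \mid n$ (so $p_1 = 2$, $p_2 = 3$) gives $\gamma(S_n)/n \le 1/6 + 2/n$, matching the lower bound. For $G = A_n$ with $n$ even and for $G = S_n$ with $n$ odd, we apply \Cref{prop:new-upper-2} along primorial-like sequences, say $n_k = 2 \cdot 3 \cdot 5 \cdots p_k$ in the first case and $n_k = 3 \cdot 5 \cdot 7 \cdots p_k$ in the second, for which $\phi(n_k)/n_k$ and $\omega(n_k)/n_k$ both tend to $0$, to obtain $\gamma(G)/n \to 1/6$. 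Finally, for $G = A_n$ with $n$ odd, \Cref{prop:BPS-upper-bound} applied along odd $n$ divisible by $15$ yields $\gamma(A_n)/n \le 4/15 + 2/n$.

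The reason that three of the liminfs remain as intervals is that the lower-bound constants $1/12$ and $1/18$ and the upper-bound constant $4/15$ do not match $1/6$. This is the essential obstacle: closing any of the three gaps would require sharpening the additive-combinatorial inputs \Cref{prop:coprime-sum-free-2/3}, \Cref{prop:coprime-cube-free}, and \Cref{prop:coprime-cube-free-2}, which is precisely what \Cref{problem-1} and \Cref{problem-2} ask for. Beyond this, the argument requires only verifying that each cited hypothesis applies (parity, divisibility, and the exclusion of exceptional $n$ or prime forms) and that the error terms in \Cref{prop:n/6-lower-bound} and \Cref{prop:n/12-lower-bound} are $o(n)$.
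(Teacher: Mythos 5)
Your assembly is correct and follows essentially the same route as the paper: pair each uniform bound from the earlier sections against a realizing sequence. The only cosmetic difference is that for $\limsup_{n\ \text{even}} \gamma(A_n)/n$ you use the sequence $n = 2^k$ ($k \ne 3$) via \Cref{remark:known-exact-things}, whereas the paper cites the sequence $n = 2p$; both work since the remark covers both $A_{2^k}$ and $A_{2p}$. Your hypothesis checks (parity, $3 \mid n$ or $15 \mid n$, the density-one exclusion of primes of the form $(q^d-1)/(q-1)$, and $\phi(n)/n, \omega(n)/n \to 0$ along primorials) are all the ones that need verifying, and the arithmetic giving $\tfrac12(1-\tfrac13)(1-\tfrac15) = \tfrac{4}{15}$ is right.
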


\begin{proof}
    The eight cases naturally split into 4 groups of 2 according to the formula $\opr{sgn}(\pi) = (-1)^{n-k}$, where $k$ is the number of cycles of $\pi$.
    Recall that to bound a limsup above we need to consider the whole sequence, while to bound a limsup below we can pass to any subsequence we like, and exactly the reverse for liminfs.

    \emph{Cases $\limsup_{n~\textup{even}} \gamma(A_n) / n$ and $\limsup_{n~\textup{odd}} \gamma(S_n) / n$:}
    See \Cref{prop:basic-upper-bounds,remark:known-exact-things}. These results are already in \cite{BP2011}.
    The limsups are attained along the sequences $2p$ and $p$ respectively.

    \emph{Cases $\limsup_{n~\textup{even}} \gamma(S_n) / n$ and $\limsup_{n~\textup{odd}} \gamma(A_n) / n$:}
    See \Cref{prop:basic-upper-bounds} for the upper bounds and \Cref{prop:S_2p-lower,prop:A_p-lower} for the lower bounds.
    The limsups are attained again along the sequences $2p$ and $p$ respectively.
    According to \Cref{prop:A_n-large-p1-lower}, in the latter case the limit is also attained along any sequence in which the smallest prime factor of $n$ tends to infinity.

    \emph{Cases $\liminf_{n~\textup{even}} \gamma(S_n) / n$ and $\liminf_{n~\textup{odd}} \gamma(A_n) / n$:}
    See \Cref{prop:BPS-upper-bound} for the upper bounds and \Cref{prop:n/6-lower-bound} for the lower bounds.
    In the former case, the liminf is attained along the sequence $n = 6m$.

    \emph{Cases $\liminf_{n~\textup{even}} \gamma(A_n) / n$ and $\liminf_{n~\textup{odd}} \gamma(S_n) / n$:}
    The lower bounds are immediate from \Cref{prop:n/12-lower-bound}.
    To establish the upper bounds we may restrict to a subsequence in $\phi(n) / n \to 0$.
    For example if $p_1, \dots, p_k$ are the first $k$ odd primes then we may take $n = 2p_1\cdots p_k$ or $n = p_1 \cdots p_k$.
    For these sequences, \Cref{prop:new-upper-2} implies $\liminf \gamma(G) / n \le 1/6$.
\end{proof}

\bibliography{refs}

\end{document}